\newcommand{\Tran}{\mathrm{Tran}}
\newcommand{\Sym}{\mathrm{Sym}}
\newcommand{\Alt}{\mathrm{Alt}}
\newcommand{\id}{\mathrm{id}}
\newcommand{\CA}{\mathrm{CA}}
\newcommand{\ICA}{\mathrm{ICA}}
\newcommand{\Rank}{\mathrm{Rank}}
\newcommand{\di}{\mathrm{d}}
\theoremstyle{plain}
\newtheorem{corollary}{Corollary}
\newtheorem{lemma}{Lemma}
\newtheorem{theorem}{Theorem}
\newtheorem*{claim*}{Claim}
\theoremstyle{definition}
\newtheorem{definition}{Definition}
\newtheorem{problem}{Problem}
\newtheorem{example}{Example}
\newtheorem{remark}{Remark}
\begin{document}

\title{Ranks of finite semigroups of one-dimensional cellular automata}
\author{Alonso Castillo-Ramirez\footnote{Corresponding author. Email: \texttt{alonso.castillo-ramirez@durham.ac.uk}} \ and Maximilien Gadouleau\footnote{Email: \texttt{m.r.gadouleau@durham.ac.uk}}  \\ \\ 
\small School of Engineering and Computing Sciences, \\ 
\small Durham University, South Road, \\ 
\small Durham, DH1 3LE \\
\small Telephone: +44 (0) 191 33 41729}
\maketitle

\begin{abstract}
Since first introduced by John von Neumann, the notion of cellular automaton has grown into a key concept in computer science, physics and theoretical biology. In its classical setting, a cellular automaton is a transformation of the set of all configurations of a regular grid such that the image of any particular cell of the grid is determined by a fixed local function that only depends on a fixed finite neighbourhood. In recent years, with the introduction of a generalised definition in terms of transformations of the form $\tau : A^G \to A^G$ (where $G$ is any group and $A$ is any set), the theory of cellular automata has been greatly enriched by its connections with group theory and topology. In this paper, we begin the finite semigroup theoretic study of cellular automata by investigating the rank (i.e. the cardinality of a smallest generating set) of the semigroup $\CA(\mathbb{Z}_n; A)$ consisting of all cellular automata over the cyclic group $\mathbb{Z}_n$ and a finite set $A$. In particular, we determine this rank when $n$ is equal to $p$, $2^k$ or $2^kp$, for any odd prime $p$ and $k \geq 1$, and we give upper and lower bounds for the general case. 
\end{abstract}


\section{Introduction}

Cellular automata (CA) were introduced by John von Neumann as an attempt to design self-reproducing systems that were computationally universal (see \cite{N66}). Since then, the theory of CA has grown into an important area of computer science, physics, and theoretical biology (e.g. \cite{CSC10,Ka05,Wo84}). Among the most famous CA are Rule 110 and John Conway's Game of Life, both of which have been widely studied as discrete dynamical systems and are known to be capable of universal computation.   

In recent years, many interesting results linking CA and group theory have appeared in the literature (e.g. see \cite{B10,CSC10,CSMS99}). One of the goals of this paper is to embark in the new task of exploring CA from the point of view of finite semigroup theory.

We shall review the broad definition of CA that appears in \cite[Sec.~1.4]{CSC10}. Let $G$ be a group and $A$ a set. Denote by $A^G$ the set of functions of the form $x:G \to A$. For each $g \in G$, denote by $R_g : G \to G$ the right multiplication map, i.e. $(h)R_g := hg$ for any $h \in G$. We shall emphasise that in this paper we apply maps on the right, while in \cite{CSC10} maps are applied on the left.   

\begin{definition} \label{def:ca}
Let $G$ be a group and $A$ a set. A \emph{cellular automaton} over $G$ and $A$ is a map $\tau : A^G \to A^G$ satisfying the following property: there exists a finite subset $S \subseteq G$ and a \emph{local map} $\mu : A^S \to A$ such that 
\[ (g)(x)\tau = (( R_g \circ x  )\vert_{S}) \mu, \]
for all $x \in A^G$, $g \in G$, where $(R_g \circ x )\vert_{S}$ is the restriction to $S$ of $(R_g \circ x) : G \to A$ .
\end{definition}

Let $\CA(G;A)$ be the set of all cellular automata over $G$ and $A$; it is straightforward to show that, under composition of maps, $\CA(G;A)$ is a semigroup. Most of the literature on CA focus on the case when $G=\mathbb{Z}^d$, $d\geq1$, and $A$ is a finite set (see \cite{Ka05}). In this situation, an element $\tau \in \CA(\mathbb{Z}^d;A)$ is referred as a \emph{$d$-dimensional} cellular automaton. 

Although results on semigroups of CA have appeared in the literature before (see \cite{H12,S15}), the semigroup structure of $\CA(G;A)$ remains basically unknown. In particular, the study of the finite semigroups $\CA(G;A)$, when $G$ and $A$ are finite, has been generally disregarded, perhaps because some of the classical questions are trivially answered (e.g. the Garden of Eden theorem becomes trivial). However, many new questions, typical of finite semigroup theory, arise in this setting. 

One of the fundamental problems in the study of a finite semigroup $M$ is the determination of the cardinality of a smallest generating subset of $M$; this is called the \emph{rank} of $M$ and denoted by $\Rank(M)$:
\[ \Rank(M) := \min \{ \vert H \vert :  H \subseteq M \text{ and } \langle H \rangle = M \}. \]
It is well-known that, if $X$ is any finite set, the rank of the full transformation semigroup $\Tran(X)$ (consisting of all functions $f : X \to X$) is $3$, while the rank of the symmetric group $\Sym(X)$ is $2$ (see \cite[Ch.~3]{GM09}). Ranks of various finite semigroups have been determined in the literature before (e.g. see \cite{ABJS14,AS09,GH87,G14,HM90}). 

In order to hopefully bring more attention to the study of finite semigroups of CA, we shall propose the following problem.
\begin{problem}\label{problem}
For any finite group $G$ and any finite set $A$, determine $\Rank(\CA(G;A))$.
\end{problem}                      
A natural restriction of this problem, and perhaps a more feasible goal, is to determine the ranks of semigroups of CA over finite abelian groups.

In this paper we study the finite semigroups $\CA(\mathbb{Z}_n;A)$, where $\mathbb{Z}_n$ is the cyclic group of order $n \geq 2$ and $A$ is a finite set with at least two elements. By analogy with the classical setting, we may say that the elements of $\CA(\mathbb{Z}_n;A)$ are one-dimensional cellular automata over $\mathbb{Z}_n$ and $A$.

In this paper we shall establish the following theorems.

\begin{theorem}\label{main1}
Let $k \geq 1$ be an integer, $p$ an odd prime, and $A$ a finite set of size $q \geq 2$. Then:
\begin{align*}
\Rank(\CA(\mathbb{Z}_p; A)) &= 5;  \\ 
\Rank(\CA(\mathbb{Z}_{2^k}; A)) &=  \begin{cases}
\frac{1}{2} k (k+7), & \text{ if } q = 2; \\  
\frac{1}{2} k (k+7) + 2, & \text{ if } q \geq 3;
\end{cases} \\ 
\Rank(\CA(\mathbb{Z}_{2^kp}; A)) & = \begin{cases}
\frac{1}{2} k (3k+17) + 3 , & \text{ if } q=2; \\
\frac{1}{2} k (3k+ 17) + 5, & \text{ if } q \geq 3. 
\end{cases}
\end{align*}
\end{theorem}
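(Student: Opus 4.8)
The plan is to work with the concrete realisation $\CA(\mathbb{Z}_n;A)\cong \mathrm{End}_{\mathbb{Z}_n}(A^{\mathbb{Z}_n})$, the monoid of all shift-equivariant self-maps of the finite $\mathbb{Z}_n$-set $A^{\mathbb{Z}_n}$, and to exploit its orbit decomposition. Under the shift, the orbits are the necklaces: an orbit has size $d$ exactly when $d\mid n$ and the configuration has least period $d$, and the number of such orbits is the Möbius/necklace count $\alpha_d=\tfrac1d\sum_{e\mid d}\mu(d/e)q^{e}$. The structural fact I would record first is that an equivariant map can send a size-$d$ orbit into a size-$d'$ orbit only when $d'\mid d$ (stabilisers can only grow). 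Thus the whole monoid is stratified by the divisor poset of $n$: invertible maps preserve each stratum, while the non-invertible maps strictly descend it by ``collapsing'' orbits.

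Next I would pin down the group of units $\ICA(\mathbb{Z}_n;A)$. A unit permutes the orbits of each fixed size and, on a single orbit $\cong\mathbb{Z}_n/H$, acts by one of its $d=[\mathbb{Z}_n:H]$ equivariant automorphisms, so I expect
\[
\ICA(\mathbb{Z}_n;A)\cong \prod_{d\mid n}\bigl(\mathbb{Z}_d\wr\Sym(\alpha_d)\bigr),
\]
with the $d=1$ factor equal to $\Sym(q)$. Computing $\Rank(\ICA)$ then reduces to the rank of this direct product of wreath products, and here the first obstacle appears: the rank of a direct product is \emph{not} the sum of the ranks, because the factors share abelian quotients. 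Indeed each $\mathbb{Z}_d\wr\Sym(\alpha_d)$ with $\alpha_d\ge2$ has abelianisation $\mathbb{Z}_d\times\mathbb{Z}_2$, so the various sign characters and the $\mathbb{Z}_{2^i}$-quotients interact. I would control this by first proving a lemma that $\Rank(\mathbb{Z}_d\wr\Sym(m))$ equals $2$ for $m\ge2$ and $1$ for $m\le1$, and then bounding the product rank below by $\Rank\bigl(\prod_d(\mathbb{Z}_d\wr\Sym(\alpha_d))^{\mathrm{ab}}\bigr)$ (an $\mathbb{F}_2$-dimension count of the $2$-power part) and above by a Gasch\"{u}tz-type generation argument, using that distinct orbit counts $\alpha_d$ give non-isomorphic non-abelian composition factors $\Alt(\alpha_d)$.

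The overall rank then splits cleanly: in any finite monoid the non-units form an ideal, so every generating set must contain a generating set of the unit group, giving $\Rank(\CA(\mathbb{Z}_n;A))=\Rank(\ICA(\mathbb{Z}_n;A))+\Rank\bigl(\CA(\mathbb{Z}_n;A):\ICA(\mathbb{Z}_n;A)\bigr)$. The relative rank is where the real difficulty lies, and I expect it to be the main obstacle. I would analyse the $\mathcal{J}$-classes of the ideal of non-units: each is indexed by the isomorphism type of the image sub-$\mathbb{Z}_n$-set, ordered by the divisor poset, and a class maximal among the non-units cannot be reached from higher elements (units stay among units), so each such ``collapse type'' must receive at least one generator. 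The subtle point is that a single $\mathcal{J}$-class may need \emph{more} than one generator: the group of units acts on its $\mathcal{R}$- and $\mathcal{L}$-classes through the kernel and image types, and when several distinct kernel shapes realise the same image type (e.g. collapsing one orbit onto a point versus folding two orbits bijectively onto one) the units cannot connect them, forcing an extra generator per such shape. Counting these shapes over all comparable pairs of divisors is a combinatorial problem on the divisor poset, and it is precisely this count that produces the quadratic-in-$k$ growth, the number of strictly comparable divisor pairs having leading term $\tfrac12k^2$ for the chain $n=2^k$ and $\tfrac32k^2$ for the grid $n=2^kp$.

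Finally I would assemble the three cases by feeding the divisor arithmetic into the two ingredients. For $n=p$ the poset is the single chain $1\mid p$: the units reduce to $\Sym(q)\times(\mathbb{Z}_p\wr\Sym(\alpha_p))$ of rank $2$, and the relative rank is the $q$-independent constant $3$ (one generator for the fixed-point merge, two for the single regular collapse type because of its two kernel shapes), giving the uniform answer $5$. In the remaining cases the unit rank grows linearly (roughly two per divisor $2^i$ with $\alpha_{2^i}\ge2$) while the relative rank grows quadratically as above, and the $q$-dependence enters only through the $d=1$ stratum of constant configurations, whose unit factor is $\Sym(q)$ (rank $1$ for $q=2$, rank $2$ for $q\ge3$) and whose collapses are governed by $\Tran(q)$; this localisation explains the ``$+2$'' discrepancies. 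The residual work is then arithmetic: evaluating the $\mathbb{F}_2$-rank of the abelianised unit group and the shape-counts over the divisors of $2^k$ and $2^kp$, and checking that the lower bounds from abelianisation and from the maximal non-unit $\mathcal{J}$-classes meet the explicit generating sets I would construct. The two delicate points I anticipate are the exact relative-rank count (how many kernel shapes each collapse type carries) and the bookkeeping of shared $2$-power quotients in $\Rank(\ICA)$.
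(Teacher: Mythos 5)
Your architecture is the same as the paper's: stratify $A^n$ into shift-orbits counted by the necklace numbers $\alpha(d,q)$, observe that a CA can map a size-$d$ orbit onto a size-$d'$ orbit only if $d'\mid d$, identify $\ICA(\mathbb{Z}_n;A)\cong\prod_{d\mid n}\bigl(\mathbb{Z}_d\wr\Sym_{\alpha(d,q)}\bigr)$ with the $d=1$ factor equal to $\Sym_q$, split $\Rank(\CA)=\Rank(\ICA)+\Rank(\CA:\ICA)$ because the non-units form an ideal, and count one collapsing idempotent per pair of divisors $t\mid s$ (including $t=s$), which is exactly the paper's $E(n)$. But two of your steps fail as stated. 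The first is your localisation of the $q$-dependence: you place it entirely in the $d=1$ stratum ($\Sym_q$ of rank $1$ or $2$, collapses governed by $\Tran(q)$). That stratum in fact behaves identically for every $q\geq 2$: one always needs $\Sym_q$ plus exactly one idempotent there. The actual degeneracy sits at the $d=2$ stratum, since $\alpha(2,2)=1$: when $q=2$ and $n$ is even there is a \emph{unique} orbit of size $2$, so the corresponding wreath factor degenerates to $\mathbb{Z}_2$ (lowering $\Rank(\ICA)$ by one) \emph{and} the loop idempotent folding one size-$2$ orbit onto another does not exist (lowering the relative rank to $E(n)-1$). Both corrections are needed to reach $\frac12 k(k+7)$ and $\frac12 k(3k+17)+3$; following your localisation would produce wrong values in the $q=2$ cases.

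The second gap is the upper bound for $\Rank(\ICA)$. Your abelianisation lower bound (a $2$-rank count giving $\di(n)+\di_+(n)$, one less in the degenerate case) is correct, but summing the ranks of the direct factors overshoots it: by $2$ for $n=p$ and $n=2^kp$, and by $1$ for $n=2^k$ with $q\geq 3$. The exact values in the theorem therefore hinge on generators that straddle distinct direct factors, concretely on proving $\Rank\bigl((\mathbb{Z}_p\wr\Sym_{\alpha(p,q)})\times\Sym_q\bigr)=2$ for odd primes $p$ (and $=3$ when $p=2$, $q\geq3$). This is the technical core of the paper: it uses Miller's theorems that $\Sym_q$ is generated by elements of orders $(2,3)$ or $(2,p')$, and, in the exceptional cases $q\in\{3,4,5,6,8,9\}$, explicit arithmetic showing $\alpha(p,q)-1\not\equiv 0\pmod{p}$, in order to exhibit a nonzero vector that generates the permutation module $(\mathbb{Z}_p)^{\alpha}$ outside the two proper submodules. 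Your appeal to a ``Gasch\"utz-type'' argument via distinct composition factors $\Alt(\alpha_d)$ does not by itself resolve this, since the obstruction is abelian and lives in a single pair of factors; as it stands your lower and upper bounds do not meet, so none of the three displayed formulas is established. A smaller omission: for the relative rank you argue only necessity of the generators; sufficiency --- that the units together with one idempotent per comparable pair generate all of $\CA(\mathbb{Z}_n;A)$ --- still requires the stratum-by-stratum factorisation of an arbitrary CA, though that part is routine compared with the above.
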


Let $2 \mathbb{Z}$ be the set of even integers. For any integer $n \geq 2$, let $[n]:=\{1,2, \dots, n\}$. Denote by $\di(n)$ the number of divisors of $n$ (including $1$ and $n$ itself) and by $\di_+(n)$ the number of even divisors of $n$. Let
\[ E(n) := \left\vert \left\{ (s,t) \in [n]^2: t\mid n, \ s \mid n, \text{ and } t \mid s \right\} \right\vert \]
be the number of edges in the \emph{divisibility digraph} of $n$ (see Section \ref{relative rank}). 

\begin{theorem}\label{main2}
Let $n \geq 2$ be an integer and $A$ a finite set of size $q \geq 2$. Then:
\[  \Rank( \CA(\mathbb{Z}_n; A)  ) =  \begin{cases}
\di(n) + \di_+(n) + E(n) - 2 + \epsilon(n,2), & \text{ if } q=2 \text{ and } n \in 2 \mathbb{Z}; \\ 
\di(n) + \di_+(n) + E(n) + \epsilon(n,q), & \text{otherwise;} 
\end{cases} \] 
where $0 \leq \epsilon(n,q) \leq \max\{ 0 , \di(n) - \di_+(n) - 2 \}$.
\end{theorem}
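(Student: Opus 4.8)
The plan is to split the rank into the rank of the group of units and the relative rank of the monoid over that group, and to read both off from the orbit structure of the shift action of $\mathbb{Z}_n$ on $A^{\mathbb{Z}_n}$. First I would record that, since $\mathbb{Z}_n$ is finite, $\CA(\mathbb{Z}_n;A)$ is exactly the monoid of $\mathbb{Z}_n$-equivariant transformations of $A^{\mathbb{Z}_n}$; the shift orbits are indexed by the divisors $t\mid n$, an orbit having size $t$ precisely when the common period of its configurations is $t$. Writing $m_t$ for the number of size-$t$ orbits, M\"obius inversion gives $m_t\geq 1$ for every $t\mid n$, the only degenerate value being $m_2=1$ when $q=2$, which will drive the exceptional case. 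Since an equivariant map can only enlarge stabilisers, it carries a size-$t$ orbit to a size-$s$ orbit with $s\mid t$, so the monoid is triangular over the divisibility poset; the units are the equivariant bijections, which preserve orbit size, and identifying them yields the group of units $G_n=\prod_{t\mid n}\bigl(\mathbb{Z}_t\wr\Sym(m_t)\bigr)$. Because the non-units form an ideal of any finite monoid, every generating set must already generate $G_n$, and I would invoke the resulting reduction $\Rank(\CA(\mathbb{Z}_n;A))=\Rank(G_n)+\Rank\bigl(\CA(\mathbb{Z}_n;A):G_n\bigr)$.

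Next I would compute $\Rank(G_n)$. The lower bound comes from the abelianisation: since $(\mathbb{Z}_t\wr\Sym(m_t))^{\mathrm{ab}}\cong\mathbb{Z}_t\times\mathbb{Z}_2$ when $m_t\geq 2$ and $\cong\mathbb{Z}_t$ when $m_t=1$, the $2$-rank of the abelianisation of $G_n$ is $\di_+(n)+\#\{t\mid n:m_t\geq 2\}$, which equals $\di(n)+\di_+(n)$ in general and one less exactly when $q=2$ and $n$ is even, the lost factor being the $\Sym(m_2)=\Sym(1)$. For the matching upper bound I would exhibit a generating set of this size; the real content here is a generation-of-products statement, namely that the direct product over the (at least two) divisors of $n$ of the wreath products $\mathbb{Z}_t\wr\Sym(m_t)$ can be generated by as many elements as its abelianisation requires. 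I would prove this using that the non-abelian composition factors $\Alt(m_t)$ are large and mostly distinct, so that only the abelian $\mathbb{Z}_t$- and $\mathbb{Z}_2$-quotients obstruct generation.

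The relative rank $\Rank\bigl(\CA(\mathbb{Z}_n;A):G_n\bigr)$ is the heart of the argument. I would set up a chain of ideals filtering the monoid by the image type of an element, that is, the tuple recording for each $t\mid n$ how many size-$t$ orbits lie in its image, so that the principal factors are completely $0$-simple semigroups whose maximal subgroups are the wreath products above. Analysing how each factor is generated modulo the larger ideal and the units, I expect one new non-unit generator to be forced for each ordered pair $(s,t)$ with $t\mid s\mid n$: the diagonal pairs $(t,t)$ supply a within-level collapse of two size-$t$ orbits, and the strict pairs with $t\mid s$ and $t<s$ supply a downward map sending a size-$s$ orbit onto a size-$t$ orbit. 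This should give the lower bound $E(n)$, reduced by one further coincidence in the $q=2$ even case to account for the remaining part of the $-2$, together with a direct construction realising these collapses simultaneously whenever the odd part of $n$ is trivial or prime.

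The main obstacle I anticipate is closing this last gap. When $n$ has at least three odd divisors, the downward generators associated with incomparable odd divisors interact, and the economical construction seems not to realise all the required collapses with exactly $E(n)$ non-units; the explicit generating set I can write down instead uses up to $\max\{0,\di(n)-\di_+(n)-2\}$ extra elements, which is precisely the number of odd divisors of $n$ beyond two. This slack is the term $\epsilon(n,q)$, and deciding whether the minimal $E(n)$-generator construction always succeeds is exactly what separates Theorem~\ref{main2} from an exact formula; it also explains why the case analysis of Theorem~\ref{main1} is confined to those $n$ with $\di(n)-\di_+(n)\leq 2$, for which the bound forces $\epsilon=0$.
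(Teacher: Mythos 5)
Your overall skeleton is the paper's: decompose $\Rank(\CA(\mathbb{Z}_n;A))$ as $\Rank(\ICA(\mathbb{Z}_n;A))$ plus the relative rank (Lemma \ref{le:preliminar}), identify the group of units as a product of generalized symmetric groups $\mathbb{Z}_t\wr\Sym_{\alpha(t,q)}$ over the divisors of $n$ (Lemma \ref{le:ICA iso}), get the lower bound $\di(n)+\di_+(n)$ from an abelian quotient, and count $E(n)$ orbit-collapsing generators. But you have placed the uncertainty $\epsilon(n,q)$ in the wrong half of the decomposition, and the step you wave through is exactly the part the paper cannot do. You assert that the rank of the unit group equals the rank of its abelianisation, justified by ``the non-abelian composition factors $\Alt(m_t)$ are large and mostly distinct, so that only the abelian quotients obstruct generation.'' That principle is false: a single factor $\mathbb{Z}_d\wr\Sym_\alpha$ with $d$ odd has abelianisation $\mathbb{Z}_d\times\mathbb{Z}_2\cong\mathbb{Z}_{2d}$ of rank $1$, yet its rank is $2$ (Lemma \ref{le:rk2}); and the symmetric parts need not be distinct (for $q=2$ one has $\alpha(1,2)=\alpha(3,2)=2$). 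What the paper actually proves is only $\di(n)+\di_+(n)\le\Rank(\ICA(\mathbb{Z}_n;A))\le 2\di(n)-2$ (Theorem \ref{th:rk-ica}), the upper bound coming from delicate explicit two-generator constructions (Lemmas \ref{le:rk2} and \ref{le:prime}, which require Miller's Theorem \ref{th:gen sym} and a case analysis on $q$); the gap $\di(n)-\di_+(n)-2$ between these bounds is precisely $\epsilon(n,q)$, which the paper leaves undetermined. Your claimed exact value of the group rank is therefore an unproven assertion strictly stronger than anything the paper establishes, and the route you propose to it does not work.

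Consequently your treatment of the relative rank is also off: the obstacle you anticipate (``downward generators associated with incomparable odd divisors interact'') does not exist. Lemma \ref{le:relative} proves that the relative rank equals exactly $E(n)$ (resp.\ $E(n)-1$ when $q=2$ and $n$ is even) for \emph{every} $n$, with no condition on the odd part of $n$: one idempotent $\tau_{i,j}$ per edge $d_j\mid d_i$ of the divisibility digraph suffices, because an arbitrary $\xi\in\CA(\mathbb{Z}_n;A)$ factors into pieces supported on single orbit-size levels, and each piece is conjugated into $\langle\ICA(\mathbb{Z}_n;A),\tau_{i,j}\rangle$ by permutations of orbits that lie in $\ICA(\mathbb{Z}_n;A)$ by Lemma \ref{le:ICA iso}. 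So the slack in Theorem \ref{main2} lives entirely in the group rank, not in the relative rank; your closing diagnosis is inverted, and, more importantly, your upper bound for the total rank rests on the unproven group-rank claim, so as written the proposal does not establish the theorem's upper bound. Your lower bound (abelianisation rank plus forcing one non-unit generator per divisibility edge) is sound and matches the paper's.
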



\section{Preliminary results}

For the rest of the paper, let $n \geq 2$ an integer and $A$ a finite set of size $q \geq 2$. We may assume that $A = \{0,1,\dots,q-1\}$. When $G$ is a finite group, we may always assume that the finite subset $S \subseteq G$ of Definition \ref{def:ca} is equal to $G$, so any cellular automaton over $G$ and $A$ is completely determined by the local map $\mu: A^G \to A$. Therefore, if $\vert G \vert = n$, we have $\left\vert  \CA(G ; A)\right\vert = q^{q^n}$.

 It is clear that $\CA(\mathbb{Z}_n;A)$ is contained in the semigroup of transformations $\Tran(A^n)$, where $A^n$ is the $n$-th Cartesian power of $A$. For any $f \in \Tran(A^n)$ write $f=(f_1, \dots, f_n)$, where $f_i : A^n \to A$ is the \emph{$i$-th coordinate function of $f$}. For any semigroup $M$ and $\sigma \in M$, define the \emph{centraliser} of $\sigma$ in $M$ by
\[ C_{M} (\sigma) := \{ \tau \in M : \tau \sigma = \sigma \tau\}. \]
It turns out that $\CA(\mathbb{Z}_n ; A)$ is equal to the centraliser of a certain transformation in $\Tran(A^n)$.

For any $f \in \Tran(A^n)$, define an equivalence relation $\sim$ on $A^n$ as follows: for any $x,y \in A^n$, say that $x \sim y$ if and only if there exist $r,s \geq 1$ such that $(x)f^r = (y)f^s$. The equivalence classes induced by this relation are called the \emph{orbits} of $f$. 

\begin{lemma} \label{le:basic}
Let $n \geq 2$ be an integer and $A$ a finite set. Consider the map $\sigma : A^n \to A^n$ given by
\[ (x_1, \dots, x_n ) \sigma = (x_n, x_1, \dots, x_{n-1}). \]
Then:
\begin{description}
\item[(i)] $\CA(\mathbb{Z}_n ; A) = C_{\Tran(A^n)}(\sigma) := \{ \tau \in \Tran(A^n) : \tau \sigma = \sigma \tau \}$.
\item[(ii)] Let $\mathcal{O}$ be the set of orbits of $\sigma : A^n \to A^n$. For every $P \in \mathcal{O}$, $\vert P \vert$ divides $n$.
\item[(iii)] Every $\tau \in \CA(\mathbb{Z}_n; A)$ satisfies the following property: for every $P \in \mathcal{O}$ there exists $Q \in \mathcal{O}$, with $\vert Q \vert$ dividing $\vert P \vert$, such that $(P)\tau = Q$.
\end{description}
\end{lemma}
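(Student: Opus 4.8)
The plan is to handle the three parts in turn, with part (i) carrying essentially all the content: the assertion is the finite-group instance of the principle that cellular automata are exactly the transformations of $A^G$ that commute with all shifts. First I would set up the shifts explicitly. For $g \in \mathbb{Z}_n$ define $\bar g \colon A^{\mathbb{Z}_n} \to A^{\mathbb{Z}_n}$ by $(x)\bar g := R_g \circ x$. Since $(k)(R_g \circ R_h) = kgh$, one has $R_g \circ R_h = R_{gh}$, from which the map $g \mapsto \bar g$ is seen to be a homomorphism (using that $\mathbb{Z}_n$ is abelian) onto the cyclic group of shifts; under the identification $A^{\mathbb{Z}_n} = A^n$ the transformation $\sigma$ of the statement is $\bar g$ for a suitable generator $g$, so $\langle \sigma \rangle$ is exactly the group of all shifts and $\sigma^n = \id$.

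For part (i) I would then prove the two inclusions. To get $\CA(\mathbb{Z}_n;A) \subseteq C_{\Tran(A^n)}(\sigma)$, take $\tau$ with local map $\mu$ and compute, straight from Definition \ref{def:ca} and $R_g \circ R_h = R_{gh}$, that both $(g)\big((x)\bar h\,\tau\big)$ and $(g)\big((x)\tau\,\bar h\big)$ equal $(gh)(x)\tau$; hence $\tau$ commutes with every $\bar h$, in particular with $\sigma$. For the reverse inclusion, let $\tau$ commute with $\sigma$, hence with every $\sigma^k = \overline{g^k}$, i.e. with every shift. I would then exhibit $\tau$ as a cellular automaton with memory set $S = \mathbb{Z}_n$ and local map $\mu \colon A^{\mathbb{Z}_n} \to A$ defined by $(y)\mu := (e)(y)\tau$, where $e$ is the identity of $\mathbb{Z}_n$: using $R_g \circ x = (x)\bar g$ and $\bar g\,\tau = \tau\,\bar g$ one gets $(R_g \circ x)\mu = (e)\big((x)\tau\,\bar g\big) = (g)(x)\tau$, which is precisely the defining identity.

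Parts (ii) and (iii) then follow quickly from part (i). For (ii), since $\sigma$ is a bijection with $\sigma^n = \id$, the relation $\sim$ applied to $\sigma$ is just the usual cyclic-orbit relation, and the orbit of any $x$ has size the least $d \geq 1$ with $(x)\sigma^d = x$; as $(x)\sigma^n = x$, this $d$ divides $n$. For (iii), fix an orbit $P$ of size $d$ with representative $x$, let $Q$ be the $\sigma$-orbit of $(x)\tau$ and $e := \vert Q \vert$. Commutativity gives $((x)\sigma^i)\tau = ((x)\tau)\sigma^i$, so $(P)\tau \subseteq Q$; moreover $((x)\tau)\sigma^d = ((x)\sigma^d)\tau = (x)\tau$, so the order $e$ of $(x)\tau$ under $\sigma$ divides $d$, giving $\vert Q \vert \mid \vert P \vert$. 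Finally, because $e \mid d$ the elements $((x)\tau)\sigma^0, \dots, ((x)\tau)\sigma^{d-1}$ exhaust $Q$, whence $(P)\tau = Q$.

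All the individual computations are routine once conventions are fixed, so the main obstacle is bookkeeping rather than ideas: because maps here are applied on the right (unlike in \cite{CSC10}), one must be consistent about $R_g \circ R_h = R_{gh}$ versus $\bar g\,\bar h = \overline{hg}$ throughout part (i), and must reconcile the reference point $e$ chosen for $\mu$ with the shift appearing in Definition \ref{def:ca}. The one genuinely load-bearing observation is the short divisibility argument in (iii), namely that $\sigma^d$ fixing $(x)\tau$ forces $e \mid d$.
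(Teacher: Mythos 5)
Your proof is correct, and parts (i) and (ii) are the paper's own argument in different clothing: the paper verifies the two inclusions of (i) by the same computation, done in coordinates rather than via the shift maps $\bar g$, and its local map $\mu = f_n$ (the $n$-th coordinate function of $f$) is exactly your $(y)\mu = (e)(y)\tau$; part (ii) is dispatched there by citing the Orbit--Stabiliser Theorem instead of your direct minimal-period argument. The genuine divergence is in part (iii). The paper first proves $(P)\tau = Q$ outright, getting the inclusion $Q \subseteq (P)\tau$ by writing an arbitrary $z \in Q$ as $(x)\sigma^{-j}\tau$, and only then obtains the divisibility $\vert Q \vert \mid \vert P \vert$ by a counting argument: for $z, w \in Q$ a suitable power $\sigma^k$ bijects the fibre of $\tau\vert_P$ over $z$ onto the fibre over $w$, so all fibres have equal size and $\vert P \vert = \vert (z)\tau^{-1} \vert \cdot \vert Q \vert$. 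You run the logic in the opposite order: divisibility comes first, from the purely order-theoretic observation that $((x)\tau)\sigma^d = ((x)\sigma^d)\tau = (x)\tau$ forces the period $e = \vert Q \vert$ of $(x)\tau$ to divide $d = \vert P \vert$, and surjectivity $(P)\tau = Q$ then falls out of $e \leq d$. Your route is shorter and identifies correctly the one load-bearing step; the paper's counting argument buys slightly more, namely that $\tau\vert_P$ is an exactly $(\vert P \vert / \vert Q \vert)$-to-one map onto $Q$, a refinement that can be useful when analysing how cellular automata collapse orbits, though it is not needed for the lemma as stated.
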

\begin{proof}
We shall prove each part.
\begin{description}
\item[(i)] By Definition \ref{def:ca}, a map $\tau:A^n \to A^n$ is a cellular automaton over $G=\mathbb{Z}_n$ and $A$ if and only if there exists a map $\mu :A^n \to A$ such that
\[ (x_1, x_2, \dots, x_n )\tau_i = (x_{1+i},x_{2+i}, \dots, x_{n+i})\mu \]
for any $1 \leq i \leq n$, where the sum in the subindex of $x_{j+i}$ is done modulo $n$. Hence,
\begin{align*}
(x_1, x_2, \dots, x_n) \sigma \tau &= (x_n, x_1, \dots, x_{n-1})\tau \\
& = ((x_1, x_2, \dots, x_n )\mu, (x_2, x_3, \dots, x_1)\mu, \dots, (x_n, x_1, \dots, x_{n-1})\mu) \\
& = ((x_2, x_3, \dots, x_1)\mu, (x_3, x_4, \dots, x_2) \mu, \dots, (x_1, x_2, \dots, x_n )\mu ) \sigma \\
& = (x_1, x_2, \dots, x_n) \tau \sigma.
\end{align*}
This shows that $\CA(\mathbb{Z}_n ; A) \leq \{ \tau \in \Tran(A^n) : \tau \sigma = \sigma \tau \}$. Let $f \in \Tran(A^n)$ be such that $f\sigma = \sigma f$. This implies that $f \sigma^k = \sigma^k f$ for any $k \in \mathbb{Z}$, so
\[ (x_1, x_2, \dots, x_n ) f_{n-k} = (x_{1-k}, x_{2-k}, \dots, x_{n-k}) f_{n}. \]
Therefore, $f$ is a cellular automaton over $\mathbb{Z}_n$ and $A$ with $\mu = f_n$.

\item[(ii)] This follows directly by the Orbit-Stabiliser Theorem (\cite[Theorem 1.4A]{Dixon}).

\item[(iii)] Fix $\tau \in \CA(\mathbb{Z}_n ; A)$, $P \in \mathcal{O}$ and $x \in \mathcal{O}$. By definition of orbit, and since $\sigma$ is a permutation, for every $y \in P$ there is $i \in \mathbb{Z}$ such that $(x)\sigma^i = y$. By part (i), $(x)\tau \sigma^i = (x)\sigma^i \tau = (y)\tau$, so $(P)\tau \subseteq Q$ for some $Q \in \mathcal{O}$. Furthermore, for every $z \in Q$ there is $j \in\mathbb{Z}$ such that $(z)\sigma^j = (x)\tau$, so $z = (x)\sigma^{-j}\tau \in (P)\tau$. This shows that $(P)\tau = Q$. Finally, we show that $\vert Q \vert$ divides $\vert P \vert$. Fix $z \in Q$. For any $w \in Q$ there is $k \in \mathbb{Z}$ such that $z = (w) \sigma^k$. Then $\sigma^k$ is a bijection between the preimage sets $(z) \tau^{-1}$ and $(w) \tau^{-1}$. This means that $\left\vert (z) \tau^{-1} \right\vert = \left\vert (w) \tau^{-1} \right\vert$ for every $w \in Q=(P)\tau$. Therefore, 
\[ \vert P \vert = \sum_{w \in Q} \left\vert (w)\tau^{-1} \right\vert =  \left\vert (z)\tau^{-1} \right\vert \cdot \left\vert Q \right\vert.\] 
\end{description}
\end{proof}

Lemma \ref{le:basic} \textbf{(i)} is in fact a particular case of a more general result.

\begin{lemma}
Let $G$ be a finite group and $A$ a finite set. For each $g \in G$, let $\sigma_g \in \CA(G;A)$ be the cellular automaton with local map $\mu_g : A^G \to A$ defined by $(x)\mu_g = (g^{-1})x$ for all $x \in A^G$. Then,
\[ \CA(G;A) = \{ \tau : A^G \to A^G : \tau \sigma_g = \sigma_g \tau, \ \forall g \in G   \}. \]
\end{lemma}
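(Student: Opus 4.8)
The plan is to prove the two inclusions separately, viewing the statement as the Curtis--Hedlund--Lyndon theorem for a finite group $G$ and as the natural generalisation of Lemma~\ref{le:basic}\textbf{(i)}. Since $G$ is finite we may take the memory set $S=G$ throughout, so that every cellular automaton is determined by a single local map $\mu\colon A^G\to A$ via $(g)(x)\tau = (R_g\circ x)\mu$, and conversely every such $\mu$ defines an element of $\CA(G;A)$. The first step is to record the explicit formula $(h)(x)\sigma_g=(g^{-1}h)x$, obtained by unwinding the definition of the local map $\mu_g$ (with $S=G$), which is the analogue of the cyclic shift $\sigma$ in Lemma~\ref{le:basic}.

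For the inclusion $\CA(G;A)\subseteq\{\tau : \tau\sigma_g=\sigma_g\tau \ \forall g\}$, I would fix $\tau\in\CA(G;A)$ with local map $\mu$ and compute both $(h)(x)\tau\sigma_g$ and $(h)(x)\sigma_g\tau$ directly from Definition~\ref{def:ca} together with the formula for $\sigma_g$. Each side reduces to $\mu$ evaluated at a suitable shift of $x$, and comparing the two shifts reduces to an identity in the group operation. This generalises the coordinate computation carried out in the proof of Lemma~\ref{le:basic}\textbf{(i)}, now performed with general group elements in place of the powers $\sigma^k$.

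For the reverse inclusion I would start from a $\tau$ commuting with every $\sigma_g$ and reconstruct a local map. Define $\mu\colon A^G\to A$ by $(x)\mu := (e)(x)\tau$, where $e$ is the identity of $G$. The commuting relations play the role of the identities $f\sigma^k=\sigma^k f$ in Lemma~\ref{le:basic}\textbf{(i)}: specialising $\tau\sigma_g=\sigma_g\tau$ and moving the evaluation point by $\sigma_g$ lets me express the value $(g)(x)\tau$ at an arbitrary $g\in G$ in terms of the value at $e$ of $\tau$ applied to a shift of $x$, and hence in terms of $\mu$. Collecting these identities yields $(g)(x)\tau = (R_g\circ x)\mu$ for all $g$ and $x$, so $\tau$ is the cellular automaton with local map $\mu$.

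The main obstacle I expect is the index bookkeeping in both directions: one must track how the neighbourhood is reindexed when $\sigma_g$ is applied and match it against the $R_g$-translated neighbourhood of Definition~\ref{def:ca}. When $G$ is abelian --- the only case needed for the cyclic groups $\mathbb{Z}_n$ studied here --- left and right translations coincide and the comparison collapses to the computation already done in Lemma~\ref{le:basic}\textbf{(i)}; in the general case one has to keep the order of multiplication straight, which is where the argument requires the most care.
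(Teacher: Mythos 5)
Your plan differs from the paper's proof, which is a one-line citation of the Curtis--Hedlund theorem \cite[Theorem 1.8.1]{CSC10}; for finite $G$ and $A$ continuity is vacuous, so a direct verification like yours is in principle the right way to make that citation explicit. However, the step you describe as mere ``index bookkeeping'' is not a matter of care: it fails outright for non-abelian $G$. Carry out your own first inclusion with $S=G$. Definition \ref{def:ca} gives $(h)(x)\tau = (R_h\circ x)\mu$ with $(k)(R_h\circ x)=(kh)x$, and your formula $(h)(x)\sigma_g=(g^{-1}h)x$ is correct; composing in the paper's right-handed order one gets
\[
(h)(x)\tau\sigma_g \;=\; \bigl(k \mapsto (kg^{-1}h)x\bigr)\mu,
\qquad
(h)(x)\sigma_g\tau \;=\; \bigl(k \mapsto (g^{-1}kh)x\bigr)\mu .
\]
These two arguments of $\mu$ coincide for all $x,h,k$ only if $kg^{-1}=g^{-1}k$ for all $k$, i.e.\ only if $g$ is central. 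So the ``identity in the group operation'' your proof needs is precisely centrality of $g$, and no amount of bookkeeping rescues it. In fact no proof can exist with the stated conventions: each $\sigma_g$ lies in $\CA(G;A)$ by construction, and a direct computation gives $\sigma_g\sigma_h=\sigma_{hg}$, so for non-abelian $G$ and $|A|\geq 2$ the maps $\sigma_g$ do not even commute with one another; hence $\CA(G;A)$ is not contained in the joint centraliser of the $\sigma_g$, and the claimed equality can only hold when $G$ is abelian.

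The root cause is a left/right mismatch between Definition \ref{def:ca}, whose local rule reads $x$ along right translates ($k\mapsto (kg)x$), and the operators $\sigma_g$, which shift the argument on the left. The transformations that genuinely commute with every cellular automaton of Definition \ref{def:ca} are the right-argument shifts $x\mapsto\bigl(h\mapsto (hg)x\bigr)$, not the $\sigma_g$; correspondingly, if you run your reverse inclusion carefully (setting $(x)\mu:=(e)(x)\tau$ and specialising the commutation relations at $h=g$), you obtain the rule $(g)(x)\tau = \bigl(k\mapsto (gk)x\bigr)\mu$, with $g$ multiplied on the \emph{left} --- which is not the rule of Definition \ref{def:ca}. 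The paper's citation of Curtis--Hedlund hides this because \cite{CSC10} uses one consistently chosen shift action in both the definition and the equivariance statement, whereas the present lemma pairs the mirrored definition with the unmirrored family $\sigma_g$; the discrepancy is invisible exactly when $G$ is abelian, which covers every group $\mathbb{Z}_n$ actually used in this paper (and there your computation reduces, as you say, to the proof of Lemma \ref{le:basic}\textbf{(i)}). As a proof of the lemma for \emph{all} finite groups, however, your proposal has a genuine gap, and closing it requires repairing the statement (replace $\sigma_g$ by the right shifts, or restate Definition \ref{def:ca} with left multiplication), not just keeping the order of multiplication straight.
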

\begin{proof}
The result follows by Curtis-Hedlund Theorem (see \cite[Theorem 1.8.1]{CSC10}).
\end{proof}

Let $\ICA(G;A)$ be the set of invertible cellular automata:
\[ \ICA(G;A) := \{ \tau \in \CA(G;A) : \exists \phi \in \CA(G;A) \text{ such that } \tau \phi = \phi \tau = \id \}. \]
It may be shown that $\ICA(G;A) = \CA(G;A) \cap \Sym(A^G)$ whenever $A$ is finite (see \cite[Theorem 1.10.2]{CSC10}).

We shall use the cyclic notation to denote the permutations in $\Tran(A^n)$. If $D \subseteq A^n$ and $a \in A^n$, we define the transformation $(D \to a) \in \Tran(A^n)$ by
\[ (x)(D \to a) := \begin{cases}
a & \text{ if } x \in D \\
x & \text{ otherwise }. 
 \end{cases} \]
 When $D=\{ b\}$ is a singleton, we write $(b \to a)$ instead of $(\{ b\} \to a)$.
 
 In the following examples, we identify the elements of $A^n$ with their lexicographical order: $(a_1, a_2, \dots, a_n) \sim \sum_{i=1}^n a_i q^{i-1}$.

\begin{example}\label{ex:p=q=2}
 A generating set for $\CA \left( \mathbb{Z}_{2} ; \{ 0,1 \} \right)$ is 
\[ \{ (1,2), \ (\{1,2\} \to 0), \ (0,3), \ (3 \to 0) \}, \]
where $0:=(0,0)$, $1:=(1,0)$, $2:=(0,1)$ and $3:=(1,1)$. Direct calculations in GAP show that indeed $\Rank(\CA \left( \mathbb{Z}_{2} ; \{ 0,1 \} \right)) = 4$.
\end{example}

\begin{example}
A generating set for $\CA \left( \mathbb{Z}_{3} ; \{ 0,1 \} \right)$ is 
\[ \{\left( 1,2,4\right)\left( 0,7\right), \ \left( 1,6\right) \left( 2,5\right) \left(3,4\right), \ (1 \to 6) (2 \to 5)( 4 \to 3), \  (\{1,2,4\} \to 0), \  \left( 7\rightarrow 0\right)   \}. \]
Direct calculations in GAP show that indeed $\Rank(\CA \left( \mathbb{Z}_{3} ; \{ 0,1 \} \right)) = 5$.
\end{example}

If $U$ is a subset of a finite semigroup $M$, the \emph{relative rank} of $U$ in $M$, denoted by $\Rank(M:U)$, is the minimum cardinality of a subset $V \subseteq M$ such that $\langle U,V \rangle = M$. The proof of the main results of this paper are based in the following observation.

\begin{lemma} \label{le:preliminar}
Let $G$ be a finite group and $A$ a finite set. Then,
\[ \Rank(\CA(G ; A)) = \Rank(\CA(G;A):\ICA(G;A)) + \Rank(\ICA(G;A)). \]
\end{lemma}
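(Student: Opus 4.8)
The plan is to prove the two inequalities
\[ \Rank(\CA(G;A)) \le \Rank(\CA(G;A):\ICA(G;A)) + \Rank(\ICA(G;A)) \]
and its reverse separately. Throughout I write $M := \CA(G;A)$ and $U := \ICA(G;A)$; since $A^G$ is finite, $M$ is a finite monoid with identity $\id$, and by definition $U$ is exactly its group of units. The heart of the whole argument is one structural fact about finite monoids: \emph{a product of elements of $M$ lies in $U$ if and only if every factor lies in $U$.}

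For the upper bound I would pick a generating set $V$ of $U$ with $\vert V\vert = \Rank(U)$ together with a set $W$ satisfying $\langle U \cup W\rangle = M$ and $\vert W\vert = \Rank(M:U)$. Since $\langle V\rangle = U$, we get $\langle V \cup W\rangle \supseteq \langle U \cup W\rangle = M$, so $V \cup W$ generates $M$ and $\Rank(M) \le \vert V\vert + \vert W\vert$, which is the $\le$ direction. Here I tacitly use that for a finite group the subsemigroup and the subgroup generated by a set coincide (inverses are positive powers), so that $\Rank(U)$ is unambiguous whether $U$ is regarded as a group or as a semigroup.

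The reverse inequality is where the structural fact is needed, so I would establish it first. In a finite monoid, if $ac = \id$ then some power $c^k$ is idempotent, and since $a^k c^k = \id$ we obtain $c^k = c^k \cdot a^k c^k = (c^k)^2 \cdot a^k \cdots$, more simply $c^k = \id \cdot c^k = a^k c^k \cdot c^k = a^k (c^k)^2 = a^k c^k = \id$; hence $c^k = \id$ and $c$, and therefore $a$, is a unit. Thus every right-invertible element is a unit, and by peeling off factors one at a time, $a_1 \cdots a_m \in U$ forces each $a_i \in U$. Granting this, take a generating set $T$ of $M$ with $\vert T\vert = \Rank(M)$ and split it as $T = (T \cap U) \cup T'$ with $T' := T \setminus U$. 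Every $u \in U$ is a product of elements of $T$, and by the structural fact all those factors lie in $U$, i.e. in $T \cap U$; hence $\langle T \cap U\rangle = U$ and $\vert T \cap U\vert \ge \Rank(U)$. Since $T \cap U \subseteq U$, we also have $\langle U \cup T'\rangle \supseteq \langle T\rangle = M$, so $\vert T'\vert \ge \Rank(M:U)$. Adding these gives $\Rank(M) = \vert T \cap U\vert + \vert T'\vert \ge \Rank(U) + \Rank(M:U)$.

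The only genuine obstacle is the structural lemma that the units are closed under factorisation, and within it the crux is the finiteness argument that right-invertibility implies invertibility via the idempotent-power trick; everything else is routine bookkeeping with generating sets. Once that lemma is secured, the two inequalities combine to yield the claimed equality.
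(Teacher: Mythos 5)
Your proof is correct, and it takes a genuinely different route from the paper: the paper disposes of this lemma in one line, observing that $\ICA(G;A)$ is by definition the group of units of the finite monoid $\CA(G;A)$ and then citing \cite[Lemma 3.1]{AS09}, which asserts precisely that $\Rank(M)=\Rank(M:U)+\Rank(U)$ for any finite semigroup $M$ with group of units $U$. What you have written is in effect a self-contained proof of that cited lemma. Both halves of your argument are sound: the upper bound is the routine union of a minimum generating set of $U$ with a minimum relative generating set, and you correctly flag the only subtlety there (in a finite group, the generated subsemigroup and subgroup coincide, since inverses and the identity are positive powers); the lower bound correctly isolates the one nontrivial ingredient, namely that in a finite monoid every right-invertible element is a unit --- your computation $a^k c^k = \id$, $c^k$ idempotent, hence $c^k = (a^k c^k)c^k = a^k(c^k)^2 = a^k c^k = \id$ is valid --- from which it follows that any factor of a unit is a unit, so a minimum generating set $T$ of $M$ splits into $T\cap U$, which must generate all of $U$, and $T\setminus U$, which must have size at least $\Rank(M:U)$. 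This closure property is exactly the statement that the non-units of a finite monoid form an ideal, and it is where finiteness genuinely enters: in an infinite monoid (e.g.\ the bicyclic monoid, where $ac=\id$ but $ca\neq\id$) right-invertible non-units exist and your factorisation lemma fails. In short, the paper's route buys brevity by outsourcing the monoid-theoretic content to \cite{AS09}, while yours buys a self-contained argument that makes the finiteness mechanism explicit; the substance of what you prove is the standard content behind the cited result.
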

\begin{proof}
As $\ICA(G;A)$ is the group of units of $\CA(G ; A)$, this follows by \cite[Lemma 3.1]{AS09}.
\end{proof}

In Section \ref{rank ica} we study the rank of $\ICA(\mathbb{Z}_n;A)$, while in Section \ref{relative rank} we study the relative rank of $\ICA(\mathbb{Z}_n;A)$ in $\CA(\mathbb{Z}_n;A)$.


\section{The rank of $\ICA(\mathbb{Z}_n ; A)$} \label{rank ica}

Let $\sigma : A^n \to A^n$ be as defined in Lemma \ref{le:basic}. For any $d \geq 1$ dividing $n$, the number of orbits of $\sigma$ of size $d$ is given by the Moreau's necklace-counting function
\[ \alpha(d, q) = \frac{1}{d} \sum_{b \mid d} \mu \left( \frac{d}{b} \right) q^{b}, \]
where $\mu$ is the classic M\"obius function (see \cite{MR83}). In particular, if $d=p^k$, where $p$ is a prime number and $k \geq 1$, then
\begin{equation}\label{eq:Moreau}
\alpha(p^k, q) = \frac{q^{p^k} - q^{p^{k-1}}}{p^k}.
\end{equation}

\begin{remark}
Observe that $\alpha(d,q) = 1$ if and only if $(d,q)=(2,2)$. Hence, the case when $n$ is even and $q=2$ is degenerate and shall be analysed separately in the rest of the paper. 
\end{remark}

We say that $d$ is a non-trivial divisor of $n$ if $d \mid n$ and $d \neq 1$ (note that, in our definition, $d = n$ is a non-trivial divisor of $n$). For any integer $\alpha \geq 1$, let $\Sym_\alpha$ and $\Alt_\alpha$ be the symmetric and alternating groups on $[\alpha]=\{1, \dots, \alpha \}$, respectively. 

A wreath product of the form $\mathbb{Z}_d \wr \Sym_{\alpha} := \{ (v; \phi) : v \in (\mathbb{Z}_d)^\alpha, \phi \in \Sym_\alpha \}$ is called a \emph{generalized symmetric group} (see \cite{O54}). We shall use the additive notation for the elements of $(\mathbb{Z}_d)^\alpha$, so the product in $\mathbb{Z}_d \wr \Sym_{\alpha}$ is 
\[ (v;\phi) \cdot (w; \psi) = (v + w^{\phi}; \phi \psi), \]
where $v,w \in (\mathbb{Z}_d)^\alpha$, $\phi, \psi \in \Sym_\alpha$, and $\phi$ acts on $w$ by permuting the coordinates. We shall identify the elements $(v; \id) \in \mathbb{Z}_d \wr \Sym_{\alpha}$ with $v \in (\mathbb{Z}_d)^\alpha$.

The following result is a refinement of \cite[Theorem 9]{S15}.

\begin{lemma} \label{le:ICA iso}
Let $n \geq 2$ be an integer and $A$ a finite set of size $q \geq 2$. Let $d_1, d_2, \dots, d_\ell$ be the non-trivial divisors of $n$. Then
\[ \ICA(\mathbb{Z}_n; A) \cong (\mathbb{Z}_{d_1} \wr \Sym_{\alpha(d_1,q)}) \times \dots \times (\mathbb{Z}_{d_\ell} \wr \Sym_{\alpha(d_\ell,q)}) \times \Sym_q .  \]
\end{lemma}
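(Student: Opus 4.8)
The plan is to understand the structure of $\ICA(\mathbb{Z}_n;A)$ by combining the combinatorial picture from Lemma~\ref{le:basic} with the observation that invertible cellular automata are exactly the permutations of $A^n$ that commute with $\sigma$. By Lemma~\ref{le:basic}(i) and the fact that $\ICA(\mathbb{Z}_n;A) = \CA(\mathbb{Z}_n;A) \cap \Sym(A^n)$, an element $\tau \in \ICA(\mathbb{Z}_n;A)$ is precisely a permutation of $A^n$ commuting with the cyclic shift $\sigma$. The centraliser of a permutation in a symmetric group is a classical object: it is a direct product, indexed by cycle lengths, of wreath products $\mathbb{Z}_d \wr \Sym_{m_d}$, where $m_d$ is the number of $d$-cycles of $\sigma$. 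The cyclic factor $\mathbb{Z}_d$ records the rotation within a single orbit of size $d$, and $\Sym_{m_d}$ permutes the $m_d$ orbits of that common size among themselves.

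First I would partition the orbits of $\sigma$ by size. By Lemma~\ref{le:basic}(ii), every orbit has size dividing $n$, and by definition $\alpha(d,q)$ counts the orbits of size exactly $d$; the orbit of size $1$ consists of the $q$ constant configurations, which are fixed pointwise by $\sigma$. Next I would invoke the structure theorem for centralisers: since $C_{\Sym(A^n)}(\sigma)$ decomposes as a direct product over the distinct cycle lengths $d$ of $\sigma$, and for each $d$ the corresponding factor is $\mathbb{Z}_d \wr \Sym_{\alpha(d,q)}$, we obtain
\[ \ICA(\mathbb{Z}_n;A) = C_{\Sym(A^n)}(\sigma) \cong \prod_{d \mid n} \left( \mathbb{Z}_d \wr \Sym_{\alpha(d,q)} \right). \]
The factor corresponding to $d=1$ is $\mathbb{Z}_1 \wr \Sym_{\alpha(1,q)} \cong \Sym_q$, since $\alpha(1,q)=q$ and $\mathbb{Z}_1$ is trivial; separating this factor out and letting $d_1,\dots,d_\ell$ range over the non-trivial divisors yields exactly the claimed isomorphism.

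The main technical content, and the step I would treat most carefully, is justifying that the centraliser of $\sigma$ in $\Sym(A^n)$ has this wreath-product form and, crucially, that the $\mathbb{Z}_d$ in each factor is the \emph{cyclic} group of rotations rather than a larger automorphism group of the orbit. The point is that a permutation commuting with $\sigma$ must send each orbit to an orbit of the same size (a stronger version of Lemma~\ref{le:basic}(iii) applied to invertible maps), and its restriction to a single orbit of size $d$, once we fix a base point, must intertwine the action of $\sigma$; the intertwiners of a single transitive $\langle\sigma\rangle$-action are precisely the $d$ rotations, giving the regular action of $\mathbb{Z}_d$. I would verify that the freedom in choosing which orbit of size $d$ maps to which, together with the independent rotation on each, assembles exactly into the wreath product $\mathbb{Z}_d \wr \Sym_{\alpha(d,q)}$ with the stated multiplication $(v;\phi)(w;\psi) = (v + w^\phi; \phi\psi)$.

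The remaining step is routine bookkeeping: collecting the independent factors across distinct orbit sizes into a direct product, and identifying the $d=1$ contribution with $\Sym_q$. Since this is billed as a refinement of \cite[Theorem~9]{S15}, I would expect the isomorphism type to be already essentially known, so the novelty lies in pinning down the explicit wreath-product presentation that will be needed later to compute $\Rank(\ICA(\mathbb{Z}_n;A))$; accordingly I would state the isomorphism with enough concreteness (explicit generators of each $\mathbb{Z}_d \wr \Sym_{\alpha(d,q)}$ acting on the labelled orbits) that the rank computation in the sequel can proceed directly from this description.
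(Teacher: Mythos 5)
Your proposal is correct, but it takes a genuinely different route from the paper's. You identify $\ICA(\mathbb{Z}_n;A)$ directly with the centraliser $C_{\Sym(A^n)}(\sigma)$ (via Lemma \ref{le:basic}\textbf{(i)} together with $\ICA(\mathbb{Z}_n;A)=\CA(\mathbb{Z}_n;A)\cap\Sym(A^n)$) and then invoke, and sketch a proof of, the classical structure theorem for centralisers of permutations in finite symmetric groups: a permutation with $m_d$ cycles of length $d$ has centraliser $\prod_d \left(\mathbb{Z}_d \wr \Sym_{m_d}\right)$, with the $d=1$ factor giving $\Sym_q$ since $\alpha(1,q)=q$. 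The paper instead goes through semigroup machinery: it embeds $\CA(\mathbb{Z}_n;A)$ into the semigroup $\Tran(A^n,\mathcal{O})$ of orbit-preserving transformations, cites \cite{ABJS14} to identify the group of units of that semigroup as $(\Sym_{d_1}\wr\Sym_{\alpha(d_1,q)})\times\dots\times(\Sym_{d_\ell}\wr\Sym_{\alpha(d_\ell,q)})\times\Sym_q$, and then cuts each factor $\Sym_{d_i}$ down to $\mathbb{Z}_{d_i}$ by computing that the centraliser of the $d_i$-cycle $\sigma\vert_P$ in $\Sym_{d_i}$ is $\langle\sigma\vert_P\rangle\cong\mathbb{Z}_{d_i}$, finally checking the reverse containment. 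The mathematical content is ultimately the same (orbits map to orbits of equal size, and the intertwiners of a transitive $\langle\sigma\rangle$-action are exactly the $d$ rotations; your base-point argument is precisely the justification the paper supplies via the centraliser-of-a-cycle computation), but your route is shorter and rests on one classical group-theoretic fact, whereas the paper's route builds the intermediate containment $\ICA(\mathbb{Z}_n;A)\leq S(A^n,\mathcal{O})$ in a form that mirrors how the orbit-preservation structure is reused later, e.g.\ in the relative-rank argument of Lemma \ref{le:relative}. One point you should make fully explicit if you write this up: the claim that an invertible CA sends each orbit bijectively onto an orbit of the \emph{same} size is not literally Lemma \ref{le:basic}\textbf{(iii)} but follows from it combined with injectivity ($\vert Q\vert$ divides $\vert P\vert$ and $\vert(P)\tau\vert=\vert P\vert$ force $\vert Q\vert=\vert P\vert$), which you correctly flag but do not spell out.
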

\begin{proof}
Let $\mathcal{O}$ the set of orbits of $\sigma : A^n \to A^n$ as defined in Lemma \ref{le:basic}. Part \textbf{(ii)} of that lemma shows that $\CA(\mathbb{Z}_n ; A)$ is contained in the semigroup
\[ \Tran(A^n, \mathcal{O}) := \{ f \in \Tran(A^n) : \forall P \in \mathcal{O}, \ \exists Q \in \mathcal{O} \text{ such that } (P)f \subseteq Q \}. \]
As $\mathcal{O}$ contains $q$ singletons and $\alpha(d_i, q)$ orbits of size $d_i \geq 2$, we know by \cite[Lemma 2.1]{ABJS14} that the group of units of $\Tran(A^n, \mathcal{O})$ is
\[ S(A^n, \mathcal{O}) \cong (\Sym_{d_1} \wr \Sym_{\alpha(d_1,q)}) \times \dots \times (\Sym_{d_\ell} \wr \Sym_{\alpha(d_\ell,q)}) \times \Sym_q.   \]
Clearly, $\ICA(\mathbb{Z}_n ; A) \leq  S(A^n, \mathcal{O})$. Let $P$ be an orbit of size $d_i$. Since the restriction of $\sigma$ to $P$, denoted by $\sigma \vert_{P}$, is a cycle of length $d_i$, and the centraliser of $\sigma \vert_{P}$ in $\Sym_{d_i}$ is $\langle \sigma \vert_{P} \rangle \cong \mathbb{Z}_{d_i}$, it follows that
\[ \ICA(\mathbb{Z}_n ; A) \leq (\mathbb{Z}_{d_1} \wr \Sym_{\alpha(d_1,q)}) \times \dots \times (\mathbb{Z}_{d_\ell} \wr \Sym_{\alpha(d_\ell,q)}) \times \Sym_q. \]
Equality follows as any permutation stabilising the sets of orbits of size $d_i$ commutes with $\sigma$. 
\end{proof}

For $1 \leq i \leq \alpha$, denote by $e^i$ the element of $(\mathbb{Z}_d)^\alpha$ with $1$ at the $i$-th coordinate, and $0$ elsewhere. Denote by $e^0$ the element of $(\mathbb{Z}_d)^\alpha$ with $0$'s everywhere. For any $\alpha \geq 2$, define permutations $z_\alpha \in \Sym_\alpha$ by 
\begin{equation} \label{permz}
z_\alpha := \begin{cases}
(1,2,3,\dots,\alpha), & \text{ if } \alpha \text{ is odd,} \\
(2,3, \dots, \alpha) & \text{ if } \alpha \text{ is even.}
\end{cases}
\end{equation}
Note that the order of $z_\alpha$, denoted by $o(z_\alpha)$, is always odd.

In the following Lemma we determine the rank of the generalized symmetric group.

\begin{lemma} \label{le:rk2}
Let $d, \alpha \geq 2$. Then, $\Rank \left( \mathbb{Z}_d \wr \Sym_{\alpha} \right) = 2$.
\end{lemma}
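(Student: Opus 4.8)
The plan is to prove both inequalities. For the lower bound, observe that for $d,\alpha \ge 2$ the group $\mathbb{Z}_d \wr \Sym_\alpha$ is non-abelian, since the action of $\Sym_\alpha$ on the base group $B := (\mathbb{Z}_d)^\alpha$ by coordinate permutations is non-trivial; a group generated by one element is cyclic, hence abelian, so $\Rank(\mathbb{Z}_d \wr \Sym_\alpha) \ge 2$. All the work is therefore in exhibiting two generators.

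For the upper bound in the main case $\alpha \ge 3$, I would set $g := (e^1; z_\alpha)$ and $h := (e^1; (1,2))$, and let $H := \langle g, h \rangle$. First I would check that the images of $g$ and $h$ in $\mathbb{Z}_d \wr \Sym_\alpha / B \cong \Sym_\alpha$, namely $z_\alpha$ and $(1,2)$, generate $\Sym_\alpha$ (using that $(1,2)$ together with either the full cycle $(1,\dots,\alpha)$ or the $(\alpha-1)$-cycle $(2,\dots,\alpha)$ generates $\Sym_\alpha$). It then remains to show $H \supseteq B$, equivalently that $N := H \cap B$ equals $B$. Since $N \trianglelefteq H$ and conjugation by an element $(c;\pi) \in H$ sends $(n;\id)$ to $(n^\pi;\id)$, the subgroup $N$ is invariant under all coordinate permutations, because the images of $H$ realise all of $\Sym_\alpha$.

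The heart of the argument is to control $N$ through two pieces. First, $h^2 = (e^1 + e^2; \id) \in N$; as $\alpha \ge 3$, the $\Sym_\alpha$-orbit of $e^1+e^2$ contains $e^1+e^3$, and subtracting gives $e^2 - e^3 \in N$, so permuting places every $e^i - e^j$ in $N$ and hence $N \supseteq B_0 := \{ v \in B : \sum_i v_i = 0 \}$. Second, I would introduce the homomorphism $\Phi : \mathbb{Z}_d \wr \Sym_\alpha \to \mathbb{Z}_d$, $\Phi(v;\phi) := \sum_i v_i$, which is a homomorphism precisely because permuting coordinates preserves the coordinate sum. Here the oddness of $o(z_\alpha)$ enters decisively: with $m := o(z_\alpha)$, the element $g^m$ lies in $B$ (its permutation part $z_\alpha^m$ is trivial) and satisfies $\Phi(g^m) = m\,\Phi(g) = m$, an odd number, while $\Phi(h^2) = 2$; hence $\Phi(N) \supseteq \langle 2, m \rangle = \mathbb{Z}_d$ because $\gcd(2,m) = 1$. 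Choosing $u \in N$ with $\Phi(u) = 1$, any $v \in B$ splits as $v = (v - \Phi(v)u) + \Phi(v)u$ with the first summand in $B_0 \subseteq N$ and the second in $N$, so $N = B$ and $H$ is the whole group.

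The case $\alpha = 2$ must be handled separately, since there $e^1+e^2$ lies on the diagonal and the difference trick degenerates; I would instead take $g = (e^1; (1,2))$ and $h = (e^1; \id)$, and note that conjugating $h = e^1$ by $g$ yields $e^2$, so $\langle g, h \rangle$ already contains $B$ together with the transposition. The main obstacle I anticipate is the second piece of the argument, forcing $\Phi(N) = \mathbb{Z}_d$: generating the total-sum direction of $B$ is exactly where a naive choice of generators fails, reaching only an index-$\gcd(\alpha,d)$ subgroup, and it is precisely the oddness of $o(z_\alpha)$ — which makes $\gcd(2, o(z_\alpha)) = 1$ against the even value $\Phi(h^2) = 2$ — that resolves it. The verification that $\langle z_\alpha, (1,2)\rangle = \Sym_\alpha$ in both parities and the bookkeeping of the $\Sym_\alpha$-orbit of $e^1 + e^2$ are the routine parts.
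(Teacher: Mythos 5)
Your proof is correct, and it even uses the same generators as the paper: the paper takes $x=(e^1;z_\alpha)$ and $y=(e^1;(1,2))$, which are your $g$ and $h$ (and since $z_2$ is the identity permutation, your separate choice for $\alpha=2$ coincides with the paper's pair there too). The difference is in the endgame, i.e.\ how one shows that $N=H\cap(\mathbb{Z}_d)^\alpha$ is all of $(\mathbb{Z}_d)^\alpha$. The paper manufactures the explicit element $2e^1$ from $y^2$ and its conjugates under the $3$-cycles $(1,2,3)$ and $(1,3,2)$, and then splits into three cases according to the parities of $d$ and $\alpha$, using $x^{\alpha}=(1,1,\dots,1)$ when $d$ is even and $\alpha$ odd, and $x^{\alpha-1}=(\alpha-1)e^1$ when both are even. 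You instead argue structurally: the differences $e^i-e^j$, obtained from $h^2$ and $\Sym_\alpha$-invariance, generate the sum-zero submodule $B_0$, while the augmentation homomorphism $\Phi(v;\phi)=\sum_i v_i$ maps $N$ onto $\mathbb{Z}_d$ because $\Phi(h^2)=2$ and $\Phi\bigl(g^{o(z_\alpha)}\bigr)=o(z_\alpha)$ is odd, and $\gcd(2,o(z_\alpha))=1$; together $N\supseteq B_0$ and $\Phi(N)=\mathbb{Z}_d$ force $N=(\mathbb{Z}_d)^\alpha$. What your route buys is uniformity: no case analysis on the parities of $d$ and $\alpha$, with the only genuine split at $\alpha=2$, where the difference trick degenerates. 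It also quietly repairs a small defect in the paper's argument: the conjugation by $(1,2,3)$ and $(1,3,2)$ tacitly assumes $\alpha\geq 3$, so the paper's computation of $2e^1$ is vacuous when $\alpha=2$, whereas you handle that case directly ($h=e^1\in N$ and conjugating by $g$ gives $e^2$). What the paper's route buys is concreteness: everything is an explicit element computation, with no need to introduce $\Phi$ or to verify that the vectors $e^i-e^j$ generate the full sum-zero submodule over $\mathbb{Z}_d$ (a standard but not entirely free fact, which you should spell out via lifting to integer vectors). Both proofs share the same skeleton — surjectivity of the projection to $\Sym_\alpha$ plus $\Sym_\alpha$-invariance of $H\cap(\mathbb{Z}_d)^\alpha$ — so the comparison is really between your module-theoretic finish and the paper's hands-on one.
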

\begin{proof}
It is clear that $\mathbb{Z}_d \wr \Sym_{\alpha}$ is not a cyclic group, so $2 \leq \Rank \left( \mathbb{Z}_d \wr \Sym_{\alpha} \right)$. 

Define $z_\alpha$ as in (\ref{permz}). We will show that $\mathbb{Z}_d \wr \Sym_{\alpha}$ is generated by 
\[ x:=  (e^1; z_\alpha) \ \text{ and } \ y:=  (e^1 ;(1,2)). \]

Let $M := \langle x,y \rangle \leq \mathbb{Z}_d \wr \Sym_{\alpha}$. Let $\rho : \mathbb{Z}_d \wr \Sym_{\alpha} \to \Sym_\alpha$ be the natural projection, and note that this is a group homomorphism. Clearly, $(M)\rho = \Sym_\alpha$ and $\ker(\rho) = (\mathbb{Z}_d)^\alpha$, so, in order to prove that $M=\mathbb{Z}_d \wr \Sym_{\alpha}$, it suffices to show that $(\mathbb{Z}_d)^\alpha \leq M$.

Since $(M)\rho = \Sym_\alpha$, the intersection $(\mathbb{Z}_d)^\alpha \cap M$ is a $\Sym_\alpha$-invariant submodule of $(\mathbb{Z}_d)^\alpha$. Observe that
\[ y^2 = e^1 + e^2 =  (1, 1, 0 \dots, 0) \in (\mathbb{Z}_d)^\alpha \cap M. \]
Now, by $\Sym_\alpha$-invariance 
\begin{align*}
 & y^2 + \sum_{i=1}^{d-1} (y^2)^{(1,2,3)} + (y^2)^{(1,3,2)}  \\
 =&  (1, 1, 0,  \dots, 0) + (0, d-1, d-1, 0, \dots, 0) + (1,0,1,0, \dots, 0)  \\
  = &   (2,0,\dots,0) =: 2e^1 \in (\mathbb{Z}_d)^\alpha \cap M 
\end{align*}
If $d$ is odd, then $2e^1$ generates $(\mathbb{Z}_d)^\alpha$ as $\Sym_\alpha$-module, so $(\mathbb{Z}_d)^\alpha \cap M = (\mathbb{Z}_d)^\alpha$. 

Suppose that $d$ is even and $\alpha$ is odd. Then,
\[ x^{\alpha} = (1,1, \dots, 1) \in (\mathbb{Z}_d)^\alpha \cap M.\] 
Since $\Sym_\alpha$ is $2$-transitive on the basis of $(\mathbb{Z}_d)^\alpha$ and $y^2 = (1, 1, 0 \dots, 0) \in (\mathbb{Z}_d)^\alpha \cap M$, we obtain that $(1,\dots,1,0) \in (\mathbb{Z}_d)^\alpha \cap M$. Therefore, 
\[ (1,1, \dots, 1) - (1,\dots,1,0) = (0,\dots,0,1) \in (\mathbb{Z}_d)^\alpha \cap M,\]
and $(\mathbb{Z}_d)^\alpha \cap M = (\mathbb{Z}_d)^\alpha$.

Finally, suppose that $d$ and $\alpha$ are both even. Then,
\[ x^{\alpha-1} = (\alpha - 1, 0, \dots, 0) \in (\mathbb{Z}_d)^\alpha \cap M.  \]
Write $\alpha-1 = 2k + 1$, for some $k \in \mathbb{N}$. Then
\[  x^{\alpha-1} - \sum_{i=1}^k 2e^1 =  (1,0, \dots, 0) \in (\mathbb{Z}_d)^\alpha \cap M.  \] 
Therefore, $(\mathbb{Z}_d)^\alpha \cap M = (\mathbb{Z}_d)^\alpha$.
\end{proof}

We need the following results in order to establish $\Rank(\ICA(\mathbb{Z}_p, A))$ when $p$ is a prime number. 

\begin{lemma}[Lemma 5.3.4 in \cite{KL90}] \label{le:sub}
Let $\alpha \geq 2$. The permutation module for $\Sym_\alpha$ over a field $\mathbb{F}$ of characteristic $p$ has precisely two proper nonzero submodules:
\[ U_1 := \left\{ (a,a, \dots, a) : a \in \mathbb{F} \right\} \ \ \text{and} \ \  U_2 := \left\{ (a_1, a_2, \dots, a_\alpha) \in \mathbb{F}^{\alpha} : \sum_{i=1}^\alpha a_i = 0 \right\}. \]
\end{lemma}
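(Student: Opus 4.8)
The plan is to prove this directly from the permutation action, avoiding Specht-module machinery. Write $M = \mathbb{F}^\alpha$ with $\Sym_\alpha$ permuting coordinates, and recall the two candidate submodules $U_1$ (constant vectors) and $U_2$ (sum-zero vectors). That each is $\Sym_\alpha$-invariant is immediate, and each is proper and nonzero since $\dim U_1 = 1$ and $\dim U_2 = \alpha - 1$ with $1 \leq \alpha - 1 < \alpha$ for $\alpha \geq 2$. The substance of the lemma is that no other proper nonzero submodule exists, which I would establish by a dichotomy applied to an arbitrary nonzero submodule $W$.

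First I would handle the case in which $W$ contains a vector $v = (v_1, \dots, v_\alpha)$ whose coordinates are not all equal, say $v_i \neq v_j$. Since $W$ is $\Sym_\alpha$-invariant, it contains $v - v^{(i,j)} = (v_i - v_j)(e^i - e^j)$, whence $e^i - e^j \in W$. Because $\Sym_\alpha$ acts transitively on ordered pairs of distinct coordinates (it is $2$-transitive for $\alpha \geq 2$), $W$ then contains every $e^k - e^l$, and these span $U_2$; thus $U_2 \subseteq W$. At this point either $W = U_2$, or $W$ contains a vector of nonzero coordinate sum. In the latter case, since $\dim(M/U_2) = 1$ and $U_2 \subseteq W$, the image of $W$ in $M/U_2$ is everything, which forces $W = M$.

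The complementary case is when every vector of $W$ has all coordinates equal, i.e. $W \subseteq U_1$; as $U_1$ is one-dimensional and $W \neq 0$, this gives $W = U_1$. Combining the two cases, any proper nonzero submodule equals $U_1$ or $U_2$, which is the assertion.

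The one point requiring care — and the place where small cases could spoil a naive count — is the claim that $U_1$ and $U_2$ are genuinely \emph{distinct}. When $p \nmid \alpha$ one has $U_1 \cap U_2 = 0$ and $M = U_1 \oplus U_2$, so they are visibly distinct; when $p \mid \alpha$ one has instead $U_1 \subsetneq U_2$, which is strict precisely when $\alpha \geq 3$. The genuinely degenerate situation is $\alpha = 2$ with $p = 2$, where $U_1 = U_2$ and the submodule lattice collapses to a single proper member. I expect this to be the main obstacle to the statement exactly as phrased, and would either exclude it by hypothesis or record it as the lone exception, noting that the applications in this paper invoke the lemma for $\alpha = \alpha(d,q)$ in regimes where distinctness holds.
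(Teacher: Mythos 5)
Your proof is correct, but there is nothing internal to compare it with: the paper gives no proof of Lemma \ref{le:sub} at all, quoting it as Lemma 5.3.4 of \cite{KL90}. Your dichotomy argument --- if a nonzero submodule $W$ contains a vector $v$ with $v_i \neq v_j$, then $v - v^{(i,j)} = (v_i - v_j)(e^i - e^j)$ and division by the nonzero scalar $v_i - v_j$ puts $e^i - e^j$ in $W$, $2$-transitivity of $\Sym_\alpha$ then yields every $e^k - e^l$, these span $U_2$, and codimension one forces $W = U_2$ or $W = M$; otherwise $W \subseteq U_1$ and one-dimensionality forces $W = U_1$ --- is the standard elementary proof and is complete. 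What your route buys is twofold: it is self-contained over an arbitrary field, and it exposes the degenerate case that the bare citation hides, namely $\alpha = 2$ in characteristic $2$, where $U_1 = U_2$ and the module has exactly one proper nonzero submodule rather than two, so the lemma as phrased fails. That caveat concerns the statement, not your argument, and it is harmless for the paper: the only place the full classification is invoked is the proof of Lemma \ref{le:prime} \textbf{(ii)}, where the module is $(\mathbb{Z}_p)^\alpha$ with $p \geq 3$, and there $U_1 \neq U_2$ (indeed $U_1 = U_2$ forces both $p \mid \alpha$ and $\alpha - 1 = 1$, i.e. $(\alpha,p) = (2,2)$). Your suggestion to either exclude this case by hypothesis or record it as the lone exception is exactly the right repair.
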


\begin{theorem}[\cite{M01,M28}] \label{th:gen sym}
Let $q \geq 3$ be an integer.
\begin{description}
\item[(i)] Except for $q \in \{5,6,8 \}$, $\Sym_q$ is generated by an element of order $2$ and an element of order $3$.
\item[(ii)] If $p^\prime > 3$ is a prime number dividing $q!$ and $q \neq 2p^\prime -1$, then $\Sym_q$ is generated by an element of order $2$ and an element of order $p^\prime$.
\end{description}
\end{theorem}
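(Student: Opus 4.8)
This is a classical result of Miller, so rather than reproduce the original case-by-case constructions I would aim for a uniform modern proof via the theory of primitive permutation groups. For each part the plan is the same: exhibit explicit generators $a,b \in \Sym_q$ with $a$ an involution and $b$ of the prescribed order, and then prove $\langle a,b\rangle = \Sym_q$ by showing in turn that this group is transitive, then primitive, and finally that it contains $\Alt_q$.

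The first, easy, observation dictates the choice of $a$. An element of order $3$ may be taken to be a single $3$-cycle and an element of prime order $p'$ a single $p'$-cycle; since cycles of odd length are even permutations, in both cases $b \in \Alt_q$. Hence $\langle a,b\rangle \le \Alt_q$ unless $a$ is an odd permutation, so I would always take $a$ to be an odd involution (a product of an odd number of disjoint transpositions). Granting this, as soon as I establish $\Alt_q \le \langle a,b\rangle$ the index-two argument forces $\langle a,b\rangle = \Sym_q$ at once. Thus everything reduces to producing a primitive group containing a short cycle.

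For part (i) I would take $b = (1,2,3)$ and choose the odd involution $a$ (according to the residue of $q$ modulo a small modulus) so that the associated graph on $[q]$ is connected, giving transitivity, and so that the point stabiliser is maximal, giving primitivity; then Jordan's theorem that a primitive subgroup of $\Sym_q$ containing a $3$-cycle contains $\Alt_q$ (see \cite{Dixon}) finishes the generic case. For part (ii) I would take $b$ to be a single $p'$-cycle, which fixes $q-p'$ points; when $q \ge p'+3$ it fixes at least three points and the analogous Jordan theorem that a primitive group of degree $q$ containing a cycle of prime length fixing at least three points contains $\Alt_q$ applies, again once $a$ is chosen to make $\langle a,b\rangle$ transitive and primitive. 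The remaining values $q \in \{p', p'+1, p'+2\}$, where the cycle is (almost) a full $q$-cycle, I would handle by the classical fact that a $q$-cycle together with a suitable transposition generates $\Sym_q$.

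The hard part is twofold. The routine-but-technical obstacle is verifying primitivity of the explicit generators uniformly in $q$, i.e. ruling out every nontrivial block system; this is the usual heart of such arguments. The genuinely delicate part is the exceptional degrees, which are not boundary cases of the method but true non-generation phenomena: for $q \in \{5,6,8\}$ in part (i), and for $q = 2p'-1$ in part (ii) (note that here the $p'$-cycle still fixes $p'-1 \ge 4$ points, so Jordan's hypothesis is met and the obstruction must instead be that no admissible pair yields a primitive, or even transitive, group). For these degrees I would argue separately, by a direct structural or computational check that no pair of the required orders generates $\Sym_q$, exploiting the special structure of the degenerate degree. It is these exceptional analyses, rather than the generic Jordan argument, where the real work lies.
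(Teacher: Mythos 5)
Your sketch has a fatal gap at the very first step---transitivity---and not merely in the ``routine-but-technical'' primitivity checks you defer. You propose to take $b$ a single $3$-cycle in part (i) (a single $p'$-cycle in part (ii)) and $a$ an involution. But an involution is a disjoint union of transpositions, so in the graph on $[q]$ whose edges come from the cycles of $a$ and of $b$ (whose connected components are exactly the orbits of $\langle a,b\rangle$), every vertex outside $\mathrm{supp}(b)$ has degree at most $1$. Hence the component containing $\mathrm{supp}(b)$ has at most $2\,\vert\mathrm{supp}(b)\vert$ vertices: each point of the cycle can be matched by $a$ to at most one new point, and those new points are joined to nothing else. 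So $\langle a,b\rangle$ is intransitive for every choice of involution whenever $q>6$ in part (i) and whenever $q>2p'$ in part (ii); the Jordan-theorem argument never gets started. This is precisely why the generators in Miller's papers \cite{M01,M28} use elements of order $3$ (resp.\ $p'$) that are products of many disjoint $3$-cycles (resp.\ $p'$-cycles) covering almost all of $[q]$; the real work is then to manufacture a short cycle \emph{inside} the generated group, or to argue by other means, and none of that is present in your outline. Note also that the paper itself gives no proof of this statement: it is quoted as a classical theorem of Miller, so an attempted proof must either be correct and self-contained or be replaced by the citation.

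A second, smaller error is your diagnosis of the exception $q=2p'-1$ in part (ii). You assert the obstruction ``must be that no admissible pair yields a primitive, or even transitive, group,'' but transitive pairs do exist there. The true obstruction is parity: since $2p'-1<2p'$, any element of order $p'$ in $\Sym_{2p'-1}$ is a single $p'$-cycle, fixing the set $F$ of the remaining $p'-1$ points; by the degree argument above, transitivity forces $a$ to match every point of $F$ into $\mathrm{supp}(b)$, leaving a single unused point of $\mathrm{supp}(b)$, so $a$ consists of exactly $p'-1$ transpositions. As $p'$ is odd, $p'-1$ is even, so $a$ is an even permutation and every transitive $\langle a,b\rangle$ lies inside $\Alt_q$---it is containment in the alternating group, not intransitivity, that kills generation at this degree.
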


\begin{lemma}\label{le:prime}
Let $p$ be a prime number and $A$ a finite set of size $q \geq 2$. Then:
\begin{description}
\item[(i)] If $q \geq 3$ and $p=2$, then $\Rank(\ICA(\mathbb{Z}_2; A)) = 3$.
\item[(ii)] If $q \geq 2$ and $p \geq 3$, or $q=p=2$, then $\Rank(\ICA(\mathbb{Z}_p; A)) = 2$.
\end{description}
\end{lemma}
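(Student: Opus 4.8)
The plan is to combine Lemma \ref{le:ICA iso} with the structure of the direct product it produces. For $n = p$ prime the only non-trivial divisor is $p$ itself, so Lemma \ref{le:ICA iso} gives $\ICA(\mathbb{Z}_p; A) \cong G \times \Sym_q$, where $G := \mathbb{Z}_p \wr \Sym_\alpha$ and $\alpha := \alpha(p,q) = (q^p - q)/p$ by (\ref{eq:Moreau}). Apart from the degenerate case $p = q = 2$ (where $\alpha = 1$, so $\ICA(\mathbb{Z}_2;\{0,1\}) \cong \mathbb{Z}_2 \times \mathbb{Z}_2$ has rank $2$, which I handle directly), one has $\alpha \geq 2$; then Lemma \ref{le:rk2} yields $\Rank(G) = 2$ with the explicit generators $x_0 = (e^1; z_\alpha)$ and $y_0 = (e^1;(1,2))$, and Theorem \ref{th:gen sym} yields $\Sym_q = \langle a, b\rangle$ with $o(a) = 2$ and $o(b)$ an odd prime (namely $3$, except for $q \in \{5,6,8\}$ where a larger prime is used). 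The whole statement thus reduces to determining $\Rank(G \times \Sym_q)$.

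For the lower bounds I would pass to abelianizations, using $\Rank(M) \geq \Rank(M^{\mathrm{ab}})$ and the fact that the rank of a finite abelian group is the largest rank among its Sylow subgroups. Because $\Sym_\alpha$ acts transitively on the coordinates of the base group, all copies of $\mathbb{Z}_p$ are identified in $G^{\mathrm{ab}}$, giving $G^{\mathrm{ab}} \cong \mathbb{Z}_p \times \mathbb{Z}_2$ (coordinate sum times $\Sym_\alpha^{\mathrm{ab}}$). Hence $G^{\mathrm{ab}} \cong \mathbb{Z}_2 \times \mathbb{Z}_2$ when $p = 2$ and $G^{\mathrm{ab}} \cong \mathbb{Z}_{2p}$ when $p \geq 3$; together with $\Sym_q^{\mathrm{ab}} \cong \mathbb{Z}_2$ this gives $(G \times \Sym_q)^{\mathrm{ab}} \cong \mathbb{Z}_2^3$ for $p = 2,\, q \geq 3$ (so $\Rank \geq 3$) and $(G \times \Sym_q)^{\mathrm{ab}} \cong \mathbb{Z}_p \times \mathbb{Z}_2^2$ for $p \geq 3$ (so $\Rank \geq 2$).

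For the upper bounds I would produce explicit generators and verify generation via a Goursat-type argument. For $p \geq 3$, set $u = (x_0, a)$ and $v = (y_0, b)$; their projections generate $G$ and $\Sym_q$, so $K := \langle u,v\rangle$ surjects onto both factors, and it suffices to show $K \cap (\{\id\}\times\Sym_q) = \{\id\}\times\Sym_q$. The corresponding normal subgroup $N \trianglelefteq \Sym_q$ lies in $\{\{\id\},\Alt_q,\Sym_q\}$ for $q \geq 5$. The possibility $N = \Alt_q$ would force a common quotient $\mathbb{Z}_2$ on which the generators agree under the sign maps; but $z_\alpha$ is even and $a$ is odd (since $b \in \Alt_q$ forces $a \notin \Alt_q$), so $\mathrm{sgn}(z_\alpha) = 0 \neq 1 = \mathrm{sgn}(a)$ and this quotient cannot occur. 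The possibility $N = \{\id\}$ would make $\Sym_q$ a quotient of $G$, excluded below. Hence $N = \Sym_q$ and $K = G \times \Sym_q$. When $p = 2$ I would instead take $u, v$ together with $w = ((e^1;\id),\id)$; using $\mathrm{sgn}(z_\alpha)=0$, $\mathrm{sgn}((1,2))=1$ and that both $x_0,y_0$ have coordinate sum $1$, the images of $u,v,w$ span $(G\times\Sym_q)^{\mathrm{ab}}\cong\mathbb{Z}_2^3$, which simultaneously kills every common $\mathbb{Z}_2$-quotient, and the same non-abelian argument then gives $K = G \times \Sym_q$.

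The main obstacle is excluding non-abelian common quotients, i.e. showing that $\Sym_q$ (and, for small $q$, groups such as $\Sym_3$) is not a quotient of $G = \mathbb{Z}_p \wr \Sym_\alpha$. For $q \geq 5$ this is a composition-factor count: the unique non-abelian composition factor of $G$ is $\Alt_\alpha$, while $\Sym_q$ contributes $\Alt_q$, and $\alpha = (q^p-q)/p > q$ forces $\Alt_\alpha \not\cong \Alt_q$. For the small values of $q$ the threat is a non-abelian quotient assembled from the base $p$-group, and here I would invoke Lemma \ref{le:sub}: its description of the submodules of the permutation $\Sym_\alpha$-module over a field of characteristic $p$ shows that the only one-dimensional quotient of $(\mathbb{Z}_p)^\alpha$ carries the trivial $\Sym_\alpha$-action, so no non-trivial conjugation action (as required by $\Sym_3$, say) can be realised on a quotient of the base. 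Finally I would dispatch the finitely many degenerate cases ($p = q = 2$, $q = 2$ with $p \geq 3$, and $q \in \{5,6,8\}$) by the direct variants indicated above.
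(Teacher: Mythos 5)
Your architecture is close to the paper's in its ingredients --- the same decomposition $\ICA(\mathbb{Z}_p;A)\cong(\mathbb{Z}_p\wr\Sym_{\alpha})\times\Sym_q$ from Lemma~\ref{le:ICA iso}, the same generators (up to the choice of Miller generators of $\Sym_q$), and a lower bound via abelianization that is equivalent to the paper's explicit quotient onto $\mathbb{Z}_p\times\mathbb{Z}_2\times\mathbb{Z}_2$ --- but your generation step is genuinely different: the paper takes powers of the generators so as to land inside $\mathbb{Z}_p\wr\Sym_\alpha\times\{\id\}$ and then runs a direct computation in the permutation module using Lemma~\ref{le:sub}, whereas you run a Goursat-type exclusion of common quotients. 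That route is viable, but as written it has a concrete gap.

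The gap is the case $(p,q)=(2,3)$. Your exclusion of non-abelian common quotients rests on two claims: that $\alpha=(q^p-q)/p>q$ (composition-factor count), and that for small $q$ any offending quotient of $G=\mathbb{Z}_p\wr\Sym_\alpha$ must be ``assembled from the base'', so Lemma~\ref{le:sub} kills it. Both fail when $p=2$, $q=3$: there $\alpha(2,3)=3$, so $G=\mathbb{Z}_2\wr\Sym_3$, and $\Sym_3$ \emph{is} a common quotient of $G$ and $\Sym_q=\Sym_3$ --- just kill the base of $G$. The composition-factor count collapses ($\Alt_\alpha\cong\Alt_q\cong\mathbb{Z}_3$), and Lemma~\ref{le:sub} says nothing because the base maps trivially under this quotient, so nothing in your proposal prevents $K$ from lying inside the fiber product $\{(g,s):\theta((g)\pr)=s\}$, $\theta\in\mathrm{Aut}(\Sym_3)$. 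Your generators do still work, but for a reason you never invoke: any surjection $\mathbb{Z}_2\wr\Sym_3\to\Sym_3$ must kill the base (its image is a normal subgroup of $\Sym_3$ that is an abelian $2$-group, hence trivial), so it factors as $\theta\circ\pr$; then $u=((e^1;z_3),a)$ lies in the fiber product only if $\theta(z_3)=a$, which is impossible since $z_3$ has order $3$ while $a$ has order $2$. You need to add this order-mismatch argument (or verify $(p,q)=(2,3)$ separately) to close the proof. A minor instance of the same oversight: $\alpha>q$ also fails at $(p,q)=(3,2)$, where $\alpha=2$, though there it is harmless because $\Sym_2$ is abelian and your sign argument already applies. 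The paper's direct computation, by contrast, never meets this issue, which is one practical advantage of its less structural approach.
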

\begin{proof}
If $q=p=2$, the result follows by Example \ref{ex:p=q=2}. Assume $(p,q) \neq (2,2)$. By Lemma \ref{le:ICA iso},
\[ \ICA(\mathbb{Z}_p; A) \cong W:= (\mathbb{Z}_{p} \wr \Sym_{\alpha}) \times \Sym_q  , \]
where $\alpha:=\alpha(p,q) \geq 2$ is the Moreau's necklace-counting function. We use the basic fact that $\Rank(G/N) \leq \Rank(G)$, for any group $G$ and any normal subgroup $N$ of $G$. Let $U_2$ be the $\Sym_\alpha$-invariant submodule of $(\mathbb{Z}_p)^\alpha$ defined in Lemma \ref{le:sub}. Then $U_2$ is a normal subgroup of $\mathbb{Z}_{p} \wr \Sym_{\alpha}$ such that $(\mathbb{Z}_{p} \wr \Sym_{\alpha})/U_2 \cong \mathbb{Z}_p \times \Sym_{\alpha}$. Now, $\Alt_{\alpha}$ is a normal subgroup of $\mathbb{Z}_p \times \Sym_{\alpha}$ with quotient $\mathbb{Z}_p \times \mathbb{Z}_2$. This implies that there is a normal subgroup $N$ of $\mathbb{Z}_p \wr \Sym_{\alpha}$ with quotient isomorphic to $\mathbb{Z}_p \times \mathbb{Z}_2$. Therefore, $N \times \Alt_q$ is a normal subgroup of $W$ with quotient group isomorphic to $\mathbb{Z}_{p} \times \mathbb{Z}_2 \times \mathbb{Z}_2$. Hence,
\begin{equation}\label{eq:low}
\Rank(\mathbb{Z}_{p} \times \mathbb{Z}_2 \times \mathbb{Z}_2) \leq \Rank(W). 
\end{equation}

Define $z_\alpha$ and $z_q$ as in (\ref{permz}). We shall prove the two cases \textbf{(i)} and \textbf{(ii)}.
\begin{description}
\item[(i)] Suppose that $q \geq 3$ and $p = 2$, so $3 \leq \Rank(W)$ by (\ref{eq:low}). We shall show that $W = \langle v_1, v_2, v_3 \rangle$ where
\begin{align*}
v_1 &:= ( (e^1 ; z_\alpha), \id ),   \\
v_2 &:= ( (e^1 ; (1,2)), z_q ), \\ 
v_3 &:= ((e^0; \id), (1,2) ). 
\end{align*}
The projections of $v_1$, $v_2$ and $v_3$ to $\Sym_q$ generate $\Sym_q$, so it is enough to prove that $v_1$ and 
 \[ (v_2)^{o(z_q)} = \begin{cases}
 ((e^1; (1,2)) , \id ), & \text{ if } o(z_q) = 1 \mod(4) \\
 ((e^2; (1,2)) , \id), & \text{ if } o(z_q) = 3 \mod(4)
  \end{cases} \] 
generate $\mathbb{Z}_2 \wr \Sym_\alpha$. Let $M := \langle v_1, (v_2)^{o(z_q)} \rangle$. We follow a similar strategy as in the proof of Lemma \ref{le:rk2}. Note that the projections of $v_1$ and $(v_2)^{o(z_q)}$ to $\Sym_\alpha$ generate $\Sym_\alpha$. Now, $(\mathbb{Z}_2)^{\alpha} \cap M$ is an $\Sym_\alpha$-invariant submodule of $(\mathbb{Z}_2)^{\alpha}$. 

If $\alpha$ is even, then
\[ (v_1)^{o(z_\alpha)} = (1,0,\dots,0) = e^1 \in (\mathbb{Z}_2)^{\alpha} \cap M, \] 
and so $(\mathbb{Z}_2)^{\alpha} \cap M= (\mathbb{Z}_2)^{\alpha}$ in this case. 

Suppose that $\alpha$ is odd. Then
\[ (v_1)^{o(z_\alpha)} = (1,1, \dots 1 )\in (\mathbb{Z}_2)^{\alpha}  \cap M. \]
Observe that  
\[ (v_2)^{2o(z_q)} = (1,1, 0, \dots, 0) \in (\mathbb{Z}_2)^{\alpha} \cap M .\]
By the $2$-transitivity of $\Sym_\alpha$ we obtain that $(0,1,\dots,1) \in  (\mathbb{Z}_2)^{\alpha} \cap M$. Therefore,
\[ e^1 = (1,1,\dots,1) + (0,1,\dots,1) \in (\mathbb{Z}_2)^{\alpha} \cap M, \] 
and we conclude that $(\mathbb{Z}_2)^{\alpha} \cap M= (\mathbb{Z}_2)^{\alpha}$ in this case as well.

\item[(ii)] Suppose that $q \geq 2$ and $p \geq 3$. Then $2 \leq \Rank(W)$ by (\ref{eq:low}). Observe that (\ref{eq:Moreau}) implies that $\alpha = \frac{q^p-q}{p}$ is always an even number. We shall show that $W = \langle u_1, u_2 \rangle$, where
\begin{equation} \label{generators}
 u_1:=((e^1; z_\alpha ), (1,2)) \ \text{ and } \ u_2:=((e^1; (1,2)), z_q ). 
 \end{equation}
As the projections of $u_1$ and $u_2$ to $\Sym_q$ generate $\Sym_q$, it is enough to show that $(u_1)^2$ and $(u_2)^{o(z_q)}$ generate $\mathbb{Z}_{p} \wr \Sym_{\alpha}$. Let $M:=\langle (u_1)^2, (u_2)^{o(z_q)} \rangle$. The projections of $(u_1)^2$ and $(u_2)^{o(z_q)}$ to $\Sym_\alpha$ generate $\Sym_\alpha$, so, as in the proof of Lemma \ref{le:rk2}, it is enough to show that $(\mathbb{Z}_p)^{\alpha} \leq M$. Observe that $(\mathbb{Z}_p)^{\alpha} \cap M$ is a $\Sym_\alpha$-invariant subspace of $(\mathbb{Z}_p)^{\alpha}$.

We shall show that $(\mathbb{Z}_p)^{\alpha}\cap M$ is a nonzero $\Sym_\alpha$-invariant subspace of $(\mathbb{Z}_p)^{\alpha}$ different from $U_1$ and $U_2$, as given by Lemma \ref{le:sub}, so $(\mathbb{Z}_p)^{\alpha}\cap M=M$. Since $p \geq 3$, it suffices to show that at least one of the following elements of $(\mathbb{Z}_p)^{\alpha}\cap M$ is nonzero:
\begin{align*}
w_1 &:= (u_1)^{2(\alpha-1)} = (2(\alpha-1),0, \dots,0), \\
w_2 &:= (u_2)^{2o(z_q)} = (o(z_q),o(z_q),0,\dots,0).
\end{align*} 
If $q = 2$, then $w_2 = (1,1,0,\dots,0)$ is nonzero, as required. Henceforth, suppose $q \geq 3$.  

For $p > 3$ and $q \not \in \{ 5,6,8 \}$, we may replace $(1,2)$ and $z_q$ by generators of $\Sym_q$ of orders $2$ and $3$, respectively (see Theorem \ref{th:gen sym} \textbf{(i)}), so $w_2 = (3,3,0,\dots,0)$ is nonzero. 

If $q=5$, then $w_2$ is nonzero, except when $p=5$. In this case, by equation (\ref{eq:Moreau}),
\[ \alpha - 1 = \frac{5^5 - 5}{5} - 1 = 623 \neq 0 \mod (5),  \]
so $w_1$ is nonzero. If $q=6$, then $w_2$ is nonzero, except when $p=5$. In this case,
\[ \alpha - 1 = \frac{6^5 - 6}{5} - 1 = 1553 \neq 0 \mod (5), \]
so $w_1$ is nonzero. If $q=8$, then $w_2$ is nonzero, except when $p=7$. In this case,
\[ \alpha - 1 = \frac{8^7 - 8}{7} - 1 = 299591 \neq 0 \mod (7), \]
so $w_1$ is nonzero.

Assume that $p=3$. If $q \geq 5$, then $5 \mid q!$ and, for $q \neq 2\cdot 5 -1 = 9$, we may replace $(1,2)$ and $z_q$ by generators of $\Sym_q$ of orders $2$ and $5$, respectively (see Theorem \ref{th:gen sym} \textbf{(ii)}), so $w_2$ is nonzero. If $q=3$, $q=4$, or $q=9$, then
\begin{align*}
\alpha - 1 &= \frac{3^3 - 3}{3}-1 = 7 \neq 0 \mod (3), \\
 \alpha - 1 &= \frac{4^3 - 4}{3} - 1 = 19 \neq 0 \mod(3), \text{ or } \\
\alpha - 1 &= \frac{9^3 - 9}{3} -1 = 239 \neq 0 \mod (3),
\end{align*}
respectively. Therefore, $w_1$ is nonzero, which completes the proof. 
\end{description}
\end{proof}

Recall that for any integer $n \geq 2$, we denote by $\di(n)$ the number of divisors of $n$ (including $1$ and $n$ itself) and by $\di_+(n)$ the number of even divisors of $n$ (so $\di_+(n)=0$ if and only if $n$ is odd).

\begin{theorem}\label{th:rk-ica}
Let $n \geq 2$ be an integer and $A$ a finite set of size $q \geq 2$.
\begin{description}
\item[(i)] If $n$ is not a power of $2$, then
\[ \Rank( \ICA(\mathbb{Z}_n; A)  ) = \begin{cases}
\di(n) + \di_+(n)  - 1 + \epsilon(n,2) & \text{if } q=2 \text{ and } n \in 2\mathbb{Z}; \\ 
\di(n) + \di_+(n) + \epsilon(n,q), & \text{otherwise;}
\end{cases} \]
where $0 \leq \epsilon(n,q) \leq \di(n) - \di_+(n) - 2$. 
\item[(ii)] If $n = 2^k$, then
\[ \Rank( \ICA(\mathbb{Z}_{2^k}; A)  )  = \begin{cases}
2 \di(2^k) - 2 = 2k & \text{if } q=2; \\
2 \di(2^k) -1 = 2k + 1 & \text{if } q \geq 3.
\end{cases} \]
\end{description}
\end{theorem}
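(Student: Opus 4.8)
The plan is to reduce everything to the explicit direct product supplied by Lemma \ref{le:ICA iso} and then compute its rank by matching an abelian lower bound against an explicit generating set. Writing $d_1 < \dots < d_\ell$ for the non-trivial divisors of $n$, we have
\[ W := \ICA(\mathbb{Z}_n;A) \cong \prod_{i=1}^{\ell} \left( \mathbb{Z}_{d_i} \wr \Sym_{\alpha(d_i,q)} \right) \times \Sym_q. \]
The first thing I would record is a structural fact: the nonabelian composition factors of $W$ are the alternating groups $\Alt_{\alpha(d_i,q)}$ with $\alpha(d_i,q) \geq 5$ together with $\Alt_q$ (when $q \geq 5$), and these are pairwise non-isomorphic, since Moreau's function makes $\alpha(d,q)$ strictly increasing in $d$ (via (\ref{eq:Moreau}) for prime powers, directly in general) and forces $\alpha(d_i,q) > q$ whenever $q \geq 4$. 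Thus each nonabelian simple group occurs as a composition factor of only one direct factor of $W$ — the reason we will not have to pay for it in the rank.

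For the lower bound I would project $W$ onto its largest elementary abelian $2$-quotient. Each genuine wreath factor $\mathbb{Z}_{d_i} \wr \Sym_{\alpha_i}$ with $\alpha_i \geq 2$ has abelianization $\mathbb{Z}_{d_i} \times \mathbb{Z}_2$, contributing $1 + [\,2 \mid d_i\,]$ to the $2$-rank, while $\Sym_q$ contributes one further $\mathbb{Z}_2$; summing over the $\di(n)-1$ non-trivial divisors gives $2$-rank $\di(n) + \di_+(n)$. In the degenerate case $q=2$, $n \in 2\mathbb{Z}$, the factor at $d=2$ collapses to $\mathbb{Z}_2$ (since $\alpha(2,2)=1$) and the count drops by one, which explains the $-1$ in the statement. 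As rank is non-increasing under quotients, this yields the displayed leading terms; for $n=2^k$ the bound reads $2k+1$ (resp. $2k$), matching part \textbf{(ii)}, while for part \textbf{(i)} the extra generators counted by $\epsilon(n,q)$ reflect the odd-prime quotients and the alternating factors carried by the odd divisors, whence the bound $\epsilon(n,q) \leq \di(n)-\di_+(n)-2$ (essentially the number of non-trivial odd divisors, minus one).

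The upper bound is the substantive part, and I would obtain it by generalising the explicit constructions of Lemmas \ref{le:rk2} and \ref{le:prime}. One builds tuples whose coordinate projections generate each factor and whose images in the elementary abelian $2$-quotient are independent. The symmetric-group components are taken to be the permutations $z_{\alpha_i}, z_q$ of (\ref{permz}) — or, where Theorem \ref{th:gen sym} applies, elements of prescribed orders $2,3,p'$ — chosen so that the various $\Sym_{\alpha_i}$ and $\Sym_q$ are driven by components of controlled, often pairwise coprime, orders, so that suitable powers isolate a single factor. Generation of each base group $(\mathbb{Z}_{d_i})^{\alpha_i}$ is then checked as before: its intersection with the generated subgroup is a $\Sym_{\alpha_i}$-invariant submodule, and Lemma \ref{le:sub} leaves only $U_1$, $U_2$, or the whole module, so exhibiting one vector outside $U_1 \cup U_2$ finishes that step.

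The decisive difficulty, which I expect to be the main obstacle, is to verify that these tuples generate the \emph{full} product and not a proper subdirect subgroup. Two things must be controlled simultaneously: the many symmetric-group quotients must be separated (handled by the order bookkeeping from Theorem \ref{th:gen sym}, so that powers annihilate all but one factor), and every base module must be generated at once (handled by the dichotomy of Lemma \ref{le:sub}). The remaining gluing obstructions live only in common simple quotients, which for the nonabelian factors vanish by the distinctness noted above and for the $2$-part are ruled out precisely because the $2$-rank bound is attained with equality. For part \textbf{(ii)} the absence of odd primes makes this clean and forces $\epsilon=0$, giving the exact values $2k$ and $2k+1$; for part \textbf{(i)} the same scheme delivers the stated interval for $\epsilon(n,q)$, and it is exactly the interaction of the odd-prime cyclic parts with the alternating factors that resists pinning $\epsilon$ down to a closed form.
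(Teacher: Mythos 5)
Your lower bound is correct and is essentially the paper's own argument: the paper passes to a normal subgroup of each wreath factor with quotient $\mathbb{Z}_{d_i}\times\mathbb{Z}_2$ (via Lemma~\ref{le:sub} and $\Alt_{\alpha(d_i,q)}$) and computes the rank of the resulting abelian quotient, which is exactly your $2$-rank count $\di(n)+\di_+(n)$, with the drop by one in the degenerate case $q=2$, $n\in 2\mathbb{Z}$ coming from $\alpha(2,2)=1$ as you say. The genuine gap is in your upper bound. First, you never fix how many tuples your construction uses; as described (tuples whose images in the elementary abelian $2$-quotient are \emph{independent}) it would use exactly $\di(n)+\di_+(n)$ of them, i.e.\ it amounts to proving $\epsilon(n,q)=0$ in all cases --- precisely the question the theorem leaves open, and which you yourself concede "resists pinning down"; the sketch is thus internally inconsistent about what it proves. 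Second, the gluing step is circular: independence of the images in the $2$-quotient is a \emph{consequence} of generating $W$, not a tool for establishing it, and "ruled out precisely because the $2$-rank bound is attained with equality" assumes the conclusion. Third, your structural observation about pairwise non-isomorphic nonabelian composition factors does no work: a proper subdirect subgroup of a direct product is obstructed by a common simple \emph{quotient}, and every simple quotient of every factor $\mathbb{Z}_{d_i}\wr\Sym_{\alpha(d_i,q)}$ and of $\Sym_q$ is abelian (namely $\mathbb{Z}_2$, which is shared by \emph{all} factors, or $\mathbb{Z}_p$ with $p\mid d_i$); the alternating groups occur as composition factors but never as quotients, so their distinctness rules out nothing, and the serious obstructions live exactly in the abelian ($2$- and odd-$p$-) parts your bookkeeping does not control.

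The fix, which is what the paper actually does, is to abandon global tuples entirely and use subadditivity of rank over direct products: $\Rank(G_1\times G_2)\le\Rank(G_1)+\Rank(G_2)$. Lemma~\ref{le:rk2} gives rank $2$ for each wreath factor; when $n$ is not a power of $2$, Lemma~\ref{le:prime}~\textbf{(ii)} gives rank $2$ for one odd-prime wreath factor \emph{bundled with} $\Sym_q$, yielding $\Rank(W)\le 2\ell=2\di(n)-2$; when $n=2^k$, Lemma~\ref{le:prime}~\textbf{(i)} gives rank $3$ for $(\mathbb{Z}_2\wr\Sym_{\alpha(2,q)})\times\Sym_q$, yielding $\Rank(W)\le 2\di(n)-1$ (with the analogous count, using $(\mathbb{Z}_2)^2$ in place of that factor, in the degenerate case). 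No analysis of subdirect subgroups is needed. Combined with the lower bound this is the entire theorem: for $n=2^k$ the two bounds coincide because $\di_+(2^k)=\di(2^k)-1$, forcing the exact values $2k$ and $2k+1$; for general $n$, $\epsilon(n,q)$ is simply \emph{defined} as the leftover gap, and the upper bound $(2\di(n)-2)-(\di(n)+\di_+(n))=\di(n)-\di_+(n)-2$ on $\epsilon$ is the difference of the two bounds, not something extracted from a construction.
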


\begin{proof}
Let $d_1, d_2, \dots, d_\ell$ be the non-trivial divisors of $n$, with $\ell = \di(n) - 1$, and let 
\[  \ICA(\mathbb{Z}_n; A) \cong W := (\mathbb{Z}_{d_1} \wr \Sym_{\alpha(d_1,q)}) \times \dots \times (\mathbb{Z}_{d_\ell} \wr \Sym_{\alpha(d_\ell, q )})  \times \Sym_q . \]
Suppose first that $q \neq 2$ or $n$ is odd. Then $\alpha(d_i,q) \geq 2$ for all $i$. As in the proof of Lemma \ref{le:prime}, there is a normal subgroup $U \trianglelefteq \mathbb{Z}_{d_i} \wr \Sym_{\alpha(d_i,q)}$ with quotient group $\mathbb{Z}_{d_i} \times \Sym_{\alpha(d_i,q)}$, and $\Alt_{\alpha(d_i,q)}$ is a normal subgroup of $\mathbb{Z}_{d_i} \times \Sym_{\alpha(d_i,q)}$ with quotient group $\mathbb{Z}_{d_i} \times \mathbb{Z}_2$. Hence, there is a normal subgroup $N_{d_i}$ of $\mathbb{Z}_{d_i} \wr \Sym_{\alpha(d_i, q)}$ with quotient isomorphic to $\mathbb{Z}_{d_i} \times \mathbb{Z}_2$. Therefore, $N_{d_1} \times \dots \times N_{d_\ell}$ is a normal subgroup of $W$ with quotient isomorphic to 
\[ Q := (\mathbb{Z}_{d_1} \times \mathbb{Z}_2) \times \dots \times (\mathbb{Z}_{d_\ell} \times \mathbb{Z}_2) \times\mathbb{Z}_2. \]
If $n$ is odd, then $\gcd(2,d_i) = 1$ for all $i$, so 
\[ Q \cong  \mathbb{Z}_{2 d_1} \times \dots \times \mathbb{Z}_{2d_\ell} \times \mathbb{Z}_2,\]
and $\Rank(Q) = \ell + 1 = \di(n)$ in this case. If $n$ is even, suppose that $d_1, \dots, d_{e}$, with $e=\di_+(n)$, are all the even divisors of $n$. Hence,
\[ Q \cong  \mathbb{Z}_{d_1} \times \dots \times \mathbb{Z}_{d_{e}} \times \mathbb{Z}_{2 d_{e+1}} \times \dots \times \mathbb{Z}_{2d_{\ell}} \times (\mathbb{Z}_2)^{e + 1},  \]
and $\Rank(Q) = \ell + e + 1 = \di(n) + \di_+(n)$. This gives the lower bound for the rank of $W$. 

For the upper bound, we shall use the basic fact that $\Rank(G_1 \times G_2) \leq \Rank(G_1) + \Rank(G_2)$, for any pair of groups $G_1$ and $G_2$. Assume first that $n$ is not a power of $2$ and let $d_\ell$ be an odd prime. Hence, $\Rank\left( (\mathbb{Z}_{d_\ell} \wr \Sym_{\alpha(d_\ell, q )})  \times \Sym_q \right) = 2$ by Lemma \ref{le:prime} \textbf{(ii)}, and $\Rank(\mathbb{Z}_{d_i} \wr \Sym_{\alpha(d_i, q )}) = 2$ for all $i$ by Lemma \ref{le:rk2}. Thus, $\Rank(W) \leq 2\ell = 2\di(n) - 2$. If $n$ is a power of $2$, then $\Rank\left( (\mathbb{Z}_{2} \wr \Sym_{\alpha(2, q )})  \times \Sym_q \right) = 3$ by Lemma \ref{le:prime} \textbf{(i)}, so $\Rank(W) \leq 2\ell + 1 = 2\di(n) - 1$.  

When $q=2$ and $n$ is even, we may assume that $d_\ell = 2$, so $\ICA(\mathbb{Z}_n; A) \cong (\mathbb{Z}_{d_1} \wr \Sym_{\alpha(d_1,2)}) \times \dots \times (\mathbb{Z}_{d_{\ell-1}} \wr \Sym_{\alpha(d_{\ell -1} , 2 )})  \times (\mathbb{Z}_2)^2$. The rest of the proof is similar to the previous paragraphs.
\end{proof}

\begin{corollary}
Let $p$ be an odd prime and $k \geq 1$ an integer. Let $A$ be a finite set of size $q \geq 2$. Then:
\[ \Rank( \ICA(\mathbb{Z}_{2^kp}; A)  ) = \begin{cases}
4k + 1 & \text{if } q=2, \\
4k + 2 & \text{if } q \geq 3.
\end{cases}  \]
\end{corollary}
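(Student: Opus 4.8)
The plan is to obtain this as a direct specialisation of Theorem \ref{th:rk-ica} \textbf{(i)}, since $n = 2^k p$ is not a power of $2$ (the odd prime $p$ divides it, and $k \geq 1$). The entire content reduces to two divisor counts together with the observation that the error-term bound happens to collapse to zero.

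First I would compute $\di(2^k p)$ and $\di_+(2^k p)$ explicitly. Every divisor of $2^k p$ has the form $2^a p^b$ with $0 \leq a \leq k$ and $b \in \{0,1\}$, so by multiplicativity $\di(2^k p) = (k+1)\cdot 2 = 2k+2$. Such a divisor is even precisely when $a \geq 1$, which leaves $k$ choices for $a$ and $2$ choices for $b$; hence $\di_+(2^k p) = 2k$. I would then feed these values into the formulas of Theorem \ref{th:rk-ica} \textbf{(i)}.

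The crucial point is the upper bound on the correction term: here $\di(n) - \di_+(n) - 2 = (2k+2) - 2k - 2 = 0$, so the inequality $0 \leq \epsilon(n,q) \leq \di(n) - \di_+(n) - 2$ pins $\epsilon(2^k p, q) = 0$ for every $q$. With the error term eliminated, the $q=2$ branch (applicable since $n = 2^k p \in 2\mathbb{Z}$ as $k \geq 1$) reads $\di(n) + \di_+(n) - 1 = 4k+1$, while the $q \geq 3$ branch reads $\di(n) + \di_+(n) = 4k+2$, which is exactly the claimed formula.

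There is no genuine obstacle here: the argument is essentially a substitution, and the only care required is the bookkeeping, namely verifying that $n$ falls under part \textbf{(i)} rather than part \textbf{(ii)} and that the parity side-condition in the $q=2$ branch is met. The pleasant feature worth emphasising is that the otherwise slippery term $\epsilon$ is forced to vanish precisely because integers of the form $2^k p$ enjoy the tight gap $\di(n) - \di_+(n) = 2$, leaving no room for the correction.
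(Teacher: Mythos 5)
Your proof is correct and follows exactly the paper's own argument: specialise Theorem \ref{th:rk-ica} \textbf{(i)} to $n = 2^k p$, observe that $\di(2^kp) - \di_+(2^kp) - 2 = 0$ forces $\epsilon(2^kp,q) = 0$, and substitute $\di(2^kp) = 2k+2$, $\di_+(2^kp) = 2k$ into the two branches. The paper states this more tersely, but your explicit divisor bookkeeping is the same computation.
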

\begin{proof}
This follows by Theorem \ref{th:rk-ica} \textbf{(i)} because $\di(2^k p) - \di_+(2^k p) - 2 = 0$, so $\epsilon(2^k p,q) = 0$.
\end{proof}


\section{The relative rank of $\ICA(\mathbb{Z}_n ; A)$ in $\CA(\mathbb{Z}_n ; A)$ } \label{relative rank}

For any integer $n \geq 2$, define the \emph{divisibility digraph of $n$} as the digraph with vertices $\mathcal{V} := \{ s \in [n] : s \mid n \}$ and edges $\mathcal{E} := \left\{ (s,t) \in \mathcal{V}^2 : t \mid s \right\}$. Denote $E(n) := \left\vert \mathcal{E} \right\vert$.  

\begin{lemma}\label{le:edges}
Let $n \geq 2$. If $n =p_1^{a_1} p_2^{a_2} \dots p_m^{a_m}$, where $p_i$ are distinct primes, then
\[ E(n) = \frac{1}{2^m}\prod_{i=1}^m (a_i + 1)(a_i+2).\]
\end{lemma}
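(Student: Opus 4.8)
The plan is to exploit the multiplicativity of the divisibility relation via the prime factorisation of $n$. Every divisor $s$ of $n$ is uniquely written as $s = \prod_{i=1}^m p_i^{b_i}$ with $0 \leq b_i \leq a_i$, so I would first set up the bijection between the vertex set $\mathcal{V}$ and the set of exponent vectors $\prod_{i=1}^m \{0,1,\dots,a_i\}$. Under this identification, if $s \leftrightarrow (b_1,\dots,b_m)$ and $t \leftrightarrow (c_1,\dots,c_m)$, then the edge condition $t \mid s$ translates into the coordinatewise inequalities $c_i \leq b_i$ for all $i$. Thus $E(n)$ equals the number of pairs of exponent vectors $((b_i),(c_i))$ satisfying $0 \leq c_i \leq b_i \leq a_i$ for every $i$.

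Next I would observe that this counting condition decouples completely across the $m$ primes: the constraint $0 \leq c_i \leq b_i \leq a_i$ involves only the $i$-th coordinates of the two vectors, with no interaction between distinct primes. Consequently the count factors as a product,
\[ E(n) = \prod_{i=1}^m N(a_i), \qquad N(a) := \left\vert \left\{ (b,c) : 0 \leq c \leq b \leq a \right\} \right\vert. \]
The remaining step is the elementary evaluation of the single-prime factor $N(a)$: summing over the value of $b$, for each fixed $b \in \{0,\dots,a\}$ there are exactly $b+1$ admissible values of $c$, so $N(a) = \sum_{b=0}^{a} (b+1) = \sum_{j=1}^{a+1} j = \tfrac{1}{2}(a+1)(a+2)$, a triangular number. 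Substituting $a = a_i$ and taking the product over $i$ yields $E(n) = \prod_{i=1}^m \tfrac{1}{2}(a_i+1)(a_i+2) = \tfrac{1}{2^m}\prod_{i=1}^m (a_i+1)(a_i+2)$, as claimed.

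There is no genuine obstacle here; the statement is a routine lattice-point count, and the only point requiring a little care is the clean justification that the global count factors as a product over the primes, which rests on the uniqueness of prime factorisation (equivalently, on the ring isomorphism supplied by the Chinese Remainder Theorem applied to the exponent lattices). Once the divisibility relation is recast as coordinatewise comparison of exponent vectors, the factorisation and the per-prime triangular-number formula finish the argument immediately.
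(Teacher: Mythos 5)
Your proof is correct and is essentially the same argument as the paper's: both reduce the edge count to pairs of exponent vectors compared coordinatewise, factor the count over the distinct primes, and evaluate the per-prime triangular number $\sum_{b=0}^{a}(b+1)=\tfrac{1}{2}(a+1)(a+2)$. The paper merely phrases the first step as summing the outdegrees $\mathrm{outdeg}(s)=\prod_i(b_i+1)$ over all divisors $s$ before factoring the multiple sum, whereas you factor first and then sum; the computation is the same.
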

\begin{proof}
Note that the outdegree of any $s = p_1^{b_1} p_2^{b_2} \dots p_m^{b_m} \mid n$ is
\[ \text{outdeg}(s) = (b_1 +1)(b_2 + 1) \dots (b_m +1). \]
Therefore,
\[  E(n)  = \sum_{s \mid n } \text{outdeg}(s) = \sum_{b_1 = 0}^{a_1} \dots \sum_{b_m = 0}^{a_m}  (b_1 +1)(b_2 + 1) \dots (b_m +1)  = \frac{1}{2^m}\prod_{i=1}^m (a_i + 1)(a_i+2). \]
\end{proof}

In the proof of the following result we shall use the notion of \emph{kernel} of a transformation $\tau : A^n \to A^n$ as the partition of $A^n$ induced by the equivalence relation $\{ (x,y ) \in A^n \times A^n : (x)\tau = (y) \tau \}$.	 

\begin{lemma} \label{le:relative}
Let $n \geq 2$ be an integer and $A$ a finite set of size $q \geq 2$. Then:
\[\Rank( \CA(\mathbb{Z}_n; A) : \ICA(\mathbb{Z}_n; A) ) = \begin{cases}
E(n) - 1 & \text{if } q=2 \text{ and } n \in 2 \mathbb{Z}; \\
 E(n) & \text{otherwise.}
\end{cases}  \]
\end{lemma}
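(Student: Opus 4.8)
The plan is to exhibit an explicit set of non-invertible generators indexed by the edges of the divisibility digraph and to match it with a double-coset lower bound. For each edge $(s,t)\in\mathcal{E}$ (so $t\mid s\mid n$) admitting a size-$s$ orbit $P$ and a size-$t$ orbit $Q\neq P$ of $\sigma$, I would fix such $P,Q$ and define the \emph{elementary collapse} $c_{s,t}\in\CA(\mathbb{Z}_n;A)$ to be the cellular automaton mapping $P$ onto $Q$ by a fixed $\sigma$-equivariant (hence $(s/t)$-to-one) surjection and fixing every point outside $P$. By Lemma \ref{le:basic} this is a well-defined element of $\CA(\mathbb{Z}_n;A)$, and it exists for every edge except $(2,2)$ when $q=2$, since $\alpha(2,2)=1$ is the only case of a size class containing a single orbit. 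I then aim to show that $\ICA(\mathbb{Z}_n;A)$ together with these $E(n)$ (respectively $E(n)-1$) collapses generates $\CA(\mathbb{Z}_n;A)$, and that no smaller set of non-units suffices.

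For the upper bound, first observe that since $\ICA(\mathbb{Z}_n;A)$ acts transitively on the orbits of each fixed size and realises every phase (Lemma \ref{le:ICA iso}), the conjugates $u^{-1}c_{s,t}u$ with $u\in\ICA(\mathbb{Z}_n;A)$ furnish \emph{every} elementary collapse of edge-type $(s,t)$, i.e.\ one mapping any prescribed size-$s$ orbit onto any prescribed size-$t$ orbit. Given $\tau\in\CA(\mathbb{Z}_n;A)$, I would build an idempotent $\epsilon$ with the same kernel and image as $\tau$: the kernel of $\tau$ merges each non-surviving orbit $P'$ into a surviving orbit of size dividing $\vert P'\vert$, so $\epsilon$ is a product of elementary collapses, one per such $P'$, taken along the \emph{direct} edge from $\vert P'\vert$ to the size of its target. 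The standard fact that two elements of a centraliser semigroup with equal kernel and image differ by right multiplication by a unit then gives $\tau=\epsilon u$ with $u\in\ICA(\mathbb{Z}_n;A)$, placing $\tau$ in the generated subsemigroup. This is exactly why every edge, and not merely the covering relations of the divisibility order, is needed: to keep $\Ima(\epsilon)=\Ima(\tau)$ we must send $P'$ straight to its target without disturbing orbits of intermediate size.

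The lower bound rests on the principle that if $\langle\ICA(\mathbb{Z}_n;A),V\rangle=\CA(\mathbb{Z}_n;A)$ with $V$ consisting of non-units, then every \emph{indecomposable} element (one that is not a product of two non-units) must lie in a double coset $\ICA(\mathbb{Z}_n;A)\,v\,\ICA(\mathbb{Z}_n;A)$ for some $v\in V$; hence $\vert V\vert$ is at least the number of such double cosets. I would then verify two things: that each $c_{s,t}$ is indecomposable, and that collapses of different edge-types lie in different double cosets. The latter is immediate, since the pair $(s,t)$ is recovered from $\Ima(c_{s,t})$ (which omits exactly one orbit, of size $s$) and from $\ker(c_{s,t})$ (whose $t$ non-singleton classes have size $s/t+1$), and both are invariant under the two-sided $\ICA(\mathbb{Z}_n;A)$-action.

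The indecomposability is the crux, and the step I expect to require the most care. The subtlety is precisely the one dispelled above: although composing a collapse $P\to R$ (sizes $s,r$) with a collapse $R\to Q$ (sizes $r,t$) does carry a size-$s$ orbit to a size-$t$ orbit, it additionally destroys the intermediate orbit $R$, so it is a strictly larger collapse and is never equal to $c_{s,t}$. To make this rigorous I would suppose $c_{s,t}=ab$ with $a,b$ non-units; because images of cellular automata are $\sigma$-invariant and $\Ima(c_{s,t})$ is all of $A^n$ with the single orbit $P$ removed, the chain $\Ima(c_{s,t})\subseteq\Ima(b)\subsetneq A^n$ forces $\Ima(b)=\Ima(c_{s,t})$, while dually $\ker(a)$ is confined to the very restricted kernel of $c_{s,t}$. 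Tracing these constraints through the product shows $b$ is itself an elementary collapse of type $(s,t)$ and $a$ is a unit, a contradiction. Counting the surviving edges yields $E(n)$ indecomposable double cosets, or $E(n)-1$ in the degenerate case $q=2$, $n\in2\mathbb{Z}$, which completes the lower bound and hence the lemma.
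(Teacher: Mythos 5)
Your generating set (one equivariant collapse per edge of the divisibility digraph, with the edge $(2,2)$ dropped when $q=2$) coincides with the paper's, but both of your arguments contain genuine gaps.

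\textbf{Upper bound.} The claim that the kernel of an arbitrary $\tau\in\CA(\mathbb{Z}_n;A)$ ``merges each non-surviving orbit $P'$ into a surviving orbit'' is false, and with it the existence of your idempotent $\epsilon$. Cellular automata can shift orbits along chains: let $z\neq z'$ be two constant configurations, let $P$ be any orbit of size greater than $1$, and let $\tau$ send every point of $P$ to $z$, send $z$ to $z'$, and fix everything else. Then $\tau\in\CA(\mathbb{Z}_n;A)$, its unique non-surviving orbit is $P$, but the non-singleton classes of $\ker\tau$ are $P$ itself and $\{z,z'\}$: the orbit $P$ is collapsed internally, without being merged into any other orbit. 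In fact \emph{no} idempotent of $\CA(\mathbb{Z}_n;A)$ has this kernel: an idempotent fixes its image pointwise, and its fixed-point set is $\sigma$-invariant, so a fixed point inside the class $P$ would force all of $P$ to be fixed, contradicting that $P$ is a single kernel class. Since any factorization $\tau=\epsilon u$ with $u$ a unit forces $\ker\epsilon=\ker\tau$, no such factorization exists --- neither with your prescribed $\epsilon$ (whose kernel has the class $P\cup\{z\}$, not $P$ and $\{z,z'\}$) nor with any other idempotent. This counterexample works for every $n\geq 2$ and $q\geq 2$. The paper avoids this trap by factoring $\tau$ itself rather than its kernel: it writes $\tau=\xi_1\xi_2\cdots\xi_\ell$ where $\xi_i$ acts as $\tau$ on the orbits of the $i$-th smallest size and fixes everything else, composed in order of \emph{increasing} orbit size; each $\xi_i$, restricted to a single orbit, fixes its target orbit and hence genuinely is an elementary collapse, and chained kernels like the one above arise automatically from the composition.

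\textbf{Lower bound.} Each $c_{s,t}$ is an idempotent, so $c_{s,t}=c_{s,t}\cdot c_{s,t}$ is a product of two non-units: no idempotent non-unit can ever be indecomposable in your sense, so the crux of your lower bound is false. Taking $a=b=c_{s,t}$ also refutes the conclusion of your tracing argument ($\Ima(b)=\Ima(c_{s,t})$ and $\ker(a)\subseteq\ker(c_{s,t})$ both hold, yet $a$ is not a unit). Consequently the count of ``indecomposable double cosets'' never gets off the ground. The statement you actually need --- any $V$ with $\langle\ICA(\mathbb{Z}_n;A),V\rangle=\CA(\mathbb{Z}_n;A)$ must contain, for each edge $(s,t)$, an element whose kernel merges exactly one size-$s$ orbit $\sigma$-equivariantly into one size-$t$ orbit and nothing else --- is obtained in the paper by a different mechanism: write $c_{s,t}$ as a word in the generators and consider the prefix ending at the \emph{first} non-unit factor $v\in V$; its kernel is contained in an $\ICA$-translate of $\ker c_{s,t}$, and one then shows that $\ker v$ must be exactly of type $(s,t)$. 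The nontrivial point here is that a CA cannot have a kernel which collapses an orbit strictly within itself while being injective elsewhere (if it did, the orbits of the image size could not be mapped injectively while avoiding the image orbit, by counting); this rules out the proper refinements. Since kernels of distinct types are distinct, $\vert V\vert\geq E(n)$, respectively $E(n)-1$ in the degenerate case. Your double-coset invariants (image missing one size-$s$ orbit; kernel classes of size $s/t+1$) are correct, but they become useful only after this first-non-unit-factor argument replaces indecomposability.
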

\begin{proof}
Let $\mathcal{O}$ be the set of orbits of $\sigma : A^n \to A^n$, as defined in Lemma \ref{le:basic}. Let $d_1, \dots, d_\ell$ be all the divisors of $n$ ordered as follows
\[ 1 = d_1 < d_2 < \dots < d_{\ell -1} < d_\ell= n. \] 
For $1 \leq i \leq \ell$, let $\alpha_i := \alpha(d_i,q)$ and denote by $\mathcal{O}_i$ the subset of $\mathcal{O}$ of orbits of size $d_i$. Let 
\[B_i := \bigcup_{P \in \mathcal{O}_i} P. \]

Suppose that $q \neq 2$ or $n$ is odd, so $\alpha_i \geq 2$ for all $i$. For any pair of divisors $d_j$ and $d_i$ such that $d_j \mid d_i$, fix $\omega_j \in B_j$ and $\omega_i \in B_i$ in distinct orbits. Denote the orbits that contains $\omega_i$ by $[\omega_i]$. Define idempotents $\tau_{i,j} \in \CA(\mathbb{Z}_n; A )$ in the following way:
\[ (x)\tau_{i,j} := \begin{cases}
 (\omega_{j}){\sigma^k} & \text{ if } x =  (\omega_{i}){\sigma^k}  \\
 x & \text{ if } x \in A^n \setminus [\omega_i].
 \end{cases} \]
Note that $\tau_{i,j}$ collapses $[\omega_i]$ to $[\omega_j]$ and fixes everything else.

We claim that
\[ H:=\left\langle \ICA(\mathbb{Z}_n ; A), \tau_{i,j} : d_j \mid d_i \right\rangle = \CA(\mathbb{Z}_n ; A). \]
Let $\xi \in \CA(\mathbb{Z}_n ; A)$.
For $1 \leq i \leq \ell$, and define
\[ (x)\xi_i := \begin{cases}
(x)\xi & \text{ if } x \in B_i \\
x & \text{otherwise}. 
 \end{cases} \] 
Clearly $\xi_i \in \CA(\mathbb{Z}_n; A)$. By Lemma \ref{le:basic}, we have $(B_i)\xi \subseteq \bigcup_{j \leq i }B_i$, so
\[ \xi = \xi_1 \xi_2 \dots \xi_\ell. \]
We shall prove that $\xi_i \in H$ for all $1 \leq i \leq \ell$. Decompose $\xi_i$ as $\xi_i = \xi_i^{\prime} \xi_{i}^{\prime \prime}$, where $(B_i)\xi_i^{\prime} \subseteq \bigcup_{j < i} B_j$ and $(B_i)\xi_{i}^{\prime \prime} \subseteq B_i$.
\begin{enumerate}
\item We show that $\xi_i^{\prime} \in H$. If $B_i = \cup_{s=1}^{\alpha_i} P_s$ is the decomposition of $B_i$ into orbits, we may write $\xi_i^\prime = \xi_i^\prime \vert_{P_1} \dots \xi_i^\prime \vert_{P_{\alpha_i}}$, where $\xi_i^\prime \vert_{P_s}$ acts as $\xi_i^\prime$ on $P_s$ and fixes everything else. In this case, $Q_s := (P_s)\xi_i^\prime \vert_{P_s}$ is an orbit contained in $B_j$ for some $j<i$. By Lemma \ref{le:ICA iso}, there is $\phi_s \in \Sym_{\alpha_i} \times \Sym_{\alpha_j} \leq \ICA(\mathbb{Z}_n;A)$ such that $\phi_s$ acts as the double transposition $( [\omega_i] , P_s) ([\omega_j] ,  Q_s)$, and 
\[  \xi_i^\prime \vert_{P_s} = \phi_s^{-1} \tau_{i, j} \phi_s \in H. \]

\item We show that $\xi_{i}^{\prime \prime} \in H$. In this case, $\xi_i^{\prime \prime} \in \Tran(B_i)$. In fact, as $\xi_i^{\prime \prime}$ preserves the partition of $B_i$ into orbits, $\xi_i^{\prime \prime} \in \langle \sigma \vert_{B_i} \rangle  \wr \Tran_{\alpha_i}$. As $\alpha_i \geq 2$, the semigroup $\Tran_{\alpha_i}$ is generated by $\Sym_{\alpha_i} \leq \ICA(\mathbb{Z}_n;A)$ together with the idempotent $\tau_{i, i}$. Hence, $\xi_i^{\prime \prime} \in H$.    
\end{enumerate}
This establishes that the relative rank of $\ICA(\mathbb{Z}_n ; A)$ in $\CA(\mathbb{Z}_n ; A)$ is at most $E(n)$.
 
For the converse, suppose that
\[ \left\langle \ICA(\mathbb{Z}_n ; A), U  \right\rangle = \CA(\mathbb{Z}_n ; A), \]
where $\vert U \vert < E(n)$. Hence, we may assume that, for some $d_j \mid d_i$, 
\begin{equation}\label{cond}
 U \cap \langle \ICA(\mathbb{Z}_n; A) , \tau_{i, j} \rangle = \emptyset. 
\end{equation}
By Lemma \ref{le:basic}, there is no $\tau \in \CA(\mathbb{Z}_n ; A)$ such that $(X) \tau \subseteq Y$ for $X \in \mathcal{O}_a$, $Y \in \mathcal{O}_b$ with $d_b \nmid d_a$. This, together with (\ref{cond}), implies that $U$ has no element with kernel of the form 
\[ \left\{ \{ x,y \}, \{ z \} : x \in P, y \in Q, z \in A^n \setminus (P \cup Q) \right\} \]
for any $P \in \mathcal{O}_i$, $Q \in \mathcal{O}_j$. Thus, there is no element in $\left\langle \ICA(\mathbb{Z}_n ; A), U  \right\rangle$ with kernel of such form, which is a contradiction (because $\tau_{i,j}\in \CA(\mathbb{Z}_n ; A)$ has indeed this kernel).

The case when $q=2$ and $n$ is even follows similarly, except that now, as there is a unique orbit of size $2$ in $\mathcal{O}$, there is no idempotent $\tau_{2,2}$.
\end{proof}

Finally, Theorems \ref{main1} and \ref{main2} follow by Theorem \ref{th:rk-ica} and Lemmas \ref{le:preliminar}, \ref{le:prime}, \ref{le:edges} and \ref{le:relative}.

\section{Acknowledgment}

This work was supported by the EPSRC grant EP/K033956/1.

\bibliographystyle{amsplain}

\bibliography{MemorylessBibVer2}
	
\end{document}